\theoremstyle{plain}
\newtheorem{theorem}{Theorem}
\newtheorem{lemma}[theorem]{Lemma}
\newtheorem{prop}[theorem]{Proposition}
\newcommand{\xar}[1]{\ensuremath{\xrightarrow{#1}}}
\newcommand{\mf}[1]{\pmb #1 }
\newcommand{\mc}[1]{\mathcal{#1}}
\newcommand{\mr}[1]{\mathrm{#1}}
\newcommand{\mb}[1]{\mathbf{#1}}
\newcommand{\R}{\mathbb{R}}
\newcommand{\Z}{\mathbb{Z}}
\newcommand{\U}{\mathrm{U(1)}}
\newcommand{\on}[1]{\operatorname{#1}}
\newcommand{\la}{\langle}
\newcommand{\ra}{\rangle}	
\newcommand{\ol}[1]{\overline{#1}}
\newcommand{\ul}[1]{\underline{#1}}
\newcommand{\op}{\mathrm{op}}
\newcommand{\N}{{\pmb  \triangle}}
\newcommand{\NN}{{\underline{\pmb \triangle}}}
\newcommand{\ca}{\circlearrowright}
\renewcommand{\c}[1]{({#1})_\circlearrowright}
\newcommand{\p}[1]{\subsection{#1}}
\titleformat{\subsection}[runin]{\small\bfseries}{\textbf{\arabic{section}.\arabic{subsection}.}}{0.5em}{}[]
\titleformat{\section}[block]{\normalsize\bfseries}{\textbf{\arabic{section}.}}{0.5em}{}[]
\title{Minimal triangulations of circle bundles, circular permutations  and binary Chern cocycle}
\author{Nikolai Mn\" ev\thanks{PDMI RAS;  Chebyshev Laboratory, SPbSU. Research is supported by the Russian Science Foundation grant 19-71-30002.} \\  \href{mailto:mnev@pdmi.ras.ru}{mnev@pdmi.ras.ru} }
\begin{document}
\maketitle

\abstract{
We investigate a  PL topology question: which circle bundles can be triangulated
over a given triangulation of the base? The question gets a simple  answer emphasizing the role of minimal triangulations encoded by
local systems of circular permutations of vertices of the base simplices. The answer is based on an experimental fact: classical Huntington transitivity axiom for cyclic orders can be expressed as the universal binary Chern cocycle. }\\

\section{Introduction}
\p{}
Let $B$ a PL polyhedron. There is Weyl-Kostant correspondence between its integer cohomology classes $H^2(B;\Z)$ and isomorphism classes of circle bundles $S^1 \xar{}E \xar{p} B$ over $B$. The class of a bundle $p$ is its first Chern class $c_1(p) \in H^2(B;\Z)$. The one-to-one correspondence is provided by the  isomorphism
 \begin{equation}\label{WK}
 H^1(B;\ul S^1)\approx \check H^2(B;\Z)
 \end{equation}
 where   $H^1(B;\ul{S^1})$ is the first  sheaf cohomology group of $B$ with coefficients in the sheaf $\ul{S^1}$ of germs of $S^1$-valued functions on $B$,  $\check H^2(B;\Z)$ is the second \v Cech cohomology group of $B$ (\cite{Chern1977}, \cite[2.1]{Brylinski2008}).  A circle bundle over $B$ can be triangulated, i.e. there is a map $\mf E \xar{\mf p} \mf B$ of simplicial complexes and a pair of homeomorphisms $b,h$ making the following square commute
\begin{equation}
\begin{tikzcd}[column sep=small]
{|\mf E|} \arrow[d,"|\mf p|"] \arrow[r,"h"]& E 	\arrow[d,"p"] \\
{|\mf  B|}\arrow[r,"b"]                   & { B}
\end{tikzcd}
\end{equation}
We address the following question: \textit{if the base  triangulating complex $\mf B$ is fixed, then which circle bundles $p$ have triangulation over $\mf B$?} The answer is complete and a bit strange sounding in the case when $\mf B$, $\mf E$ are semi-simplicial sets and $\mf p$ is a singular map of semi-simplicial sets. A singular map of finite semi-simplicial sets is a natural generalization of a map of simplicial complexes to a more flexible combinatorial category which still functorially represents PL maps by geometric realization.  A semi-simplicial set has its simplices ordered. The orders create special   orientations of simplices and thus simplicial chain and cochain complexes $C_\bullet ^\triangle (\mf B; \Z)$, $C^\bullet _\triangle (\mf B; \Z)$ computing integer singular homology and cohomology of $B$.
The answer is as follows:
\begin{theorem} \label{thm01}
A circle bundle $p$ can be semi-simplicially triangulated over the base finite semi-simplicial set
$\mf B$ iff its integer Chern class $c_1(p)\in H^2(|\pmb B|; \Z)$ can be represented by a binary simplicial cocycle in $Z^2_\triangle (\mf B; \{0,1\}\subset\Z)$ having values 0 and 1 on 2-simplices. For classical  simplicial triangulations the condition  is necessary but not sufficient.
\end{theorem}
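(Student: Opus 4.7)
The plan is to build the proof around an explicit dictionary between semi-simplicial triangulations $\mf E \xar{\mf p} \mf B$ of a circle bundle and local systems of circular permutations of the vertices lying over each simplex of $\mf B$. First I would show that over any simplex $\sigma\in\mf B$, the preimage $\mf p^{-1}(\sigma)$ is fiberwise a triangulated circle, and that the ordered vertices of $\sigma$ together with the preferred orientations on the semi-simplicial set structure single out, on the finite set of vertices of the fiber, a well-defined cyclic order (a circular permutation). These cyclic orders restrict compatibly to faces, yielding a local system of cyclic orders on $\mf B$ indexed by simplex.

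Next, I would perform the local computation on a 2-simplex: given three induced cyclic orders coming from the three edges of $\sigma^2$, Huntington's transitivity axiom is a binary predicate on triples of triples of fiber vertices, and its failure/success on each 2-simplex defines a $\{0,1\}$-valued cochain $z_{\mf p}\in C^2_\triangle(\mf B;\{0,1\}\subset\Z)$. The heart of this step is to show $z_{\mf p}$ is a cocycle and that under the Weyl-Kostant isomorphism \eqref{WK} its class equals $c_1(p)$; the argument would use the abstract that Huntington transitivity itself is the universal binary Chern cocycle, so once the tautological model over the classifying space of cyclically ordered points is set up, the general case follows by pullback. This gives the necessity direction in both the semi-simplicial and the classical simplicial settings.

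For sufficiency in the semi-simplicial category, I would run the construction in reverse: given a binary cocycle $z$ representing $c_1(p)$, assemble $\mf E_z$ by choosing, for every simplex $\sigma$ of $\mf B$, a standard minimally triangulated prism $\sigma\times S^1$, with the number of ``half-twists'' over each 2-face prescribed by the value $z(\sigma^2)\in\{0,1\}$, and glue along faces via the obvious face maps. The cocycle condition is precisely what makes these local gluings consistent on 3-simplices; freedom to repeat vertices in the semi-simplicial category is what permits the minimal (two- or three-vertex-per-fiber) models needed to realize both values $0$ and $1$.

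The main obstacle I expect is the second half of the last sentence: exhibiting a classical simplicial complex $\mf B$ and a binary cocycle $z$ whose class cannot be realized by any honest simplicial triangulation. The approach would be to find $\mf B$ small enough that the vertex-distinctness axiom of simplicial complexes forbids the minimal prism models required by $z$, while no combinatorial expansion of $\mf B$ is allowed by hypothesis. A natural candidate is a triangulation of $S^2$ with few vertices carrying the generator of $H^2(S^2;\Z)$: the Hopf bundle has $c_1=1$, and one would verify by enumeration of possible simplicial lifts that the vertex-sharing constraints in the fibered 3-complex over $\mf B$ cannot simultaneously satisfy the cyclic-order constraint dictated by $z$. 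Verifying this enumeration rigorously, rather than just heuristically, is the step most likely to require substantial casework.
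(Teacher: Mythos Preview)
Your proposal has the right overall shape (triangulation $\leftrightarrow$ local system of circular permutations $\leftrightarrow$ binary cocycle, and Huntington transitivity as the glue), but there is a genuine gap in the necessity direction and a misplacement of where the Huntington axiom enters.

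\medskip

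\textbf{The missing reduction step.} You write that over any simplex $\sigma$ the ordered vertices ``single out, on the finite set of vertices of the fiber, a well-defined cyclic order (a circular permutation).'' This is not true for an arbitrary semi-simplicial triangulation. The stalk over a $k$-simplex is encoded by an oriented \emph{necklace} in the alphabet $\{0,\dots,k\}$, with possibly many beads of each color; only when each color appears exactly once do you get a circular permutation. The paper handles this with a nontrivial intermediate step, the \emph{spindle contraction} (Section~\ref{spdl}): one chooses a single arc in each fiber circle and contracts the remaining spindles, producing a strongly concordant \emph{minimal} s.c.\ bundle over the same base $\mf B$. Only after this reduction does one obtain a local system of circular permutations and hence a binary value on each $2$-simplex. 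Without it, there is no canonical way to extract a $\{0,1\}$-cochain from a general triangulation, and your necessity argument does not start.

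\medskip

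\textbf{Where Huntington sits, and what the cochain is.} Your description of $z_{\mf p}$ as ``failure/success of Huntington transitivity on triples coming from the three edges of $\sigma^2$'' is off by one dimension. Cyclic orders on $2$-element sets are unique, so the edges of a $2$-simplex carry no information. In the paper the binary value on a $2$-simplex is simply the \emph{parity} of the circular permutation of its three vertices (formula $\mb c_{01}$, Section~\ref{01}): $\c{0,1,2}\mapsto 0$, $\c{2,1,0}\mapsto 1$. Huntington transitivity enters one level up, on the boundary of a $3$-simplex: the four parities on the faces satisfy the cocycle equation $\sum (-1)^i f_i=0$ precisely when the four cyclic orders on triples extend to a cyclic order on $[3]$ (Proposition~\ref{01har}). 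This is also what drives your sufficiency direction. You should add the Kan-type extension for $k\ge 4$ (Proposition~\ref{kan}) to push the local system past dimension~$3$; your sketch stops at $3$-simplices.

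\medskip

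For the classical simplicial counterexample, your instinct is right; the paper simply cites \cite{Mnev:2018}, and Theorem~\ref{thm02} gives a whole family: over any triangulated closed oriented surface the bundle with $|c|=\tfrac12\#\mf T_2$ can be triangulated semi-simplicially but not simplicially.
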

In particular we got Effortless \& Local construction of triangulated circle bundles over a triangulation of a closed surface.  In this situation any binary 2-cochain is cocycle. When the surface is oriented, the circle bundles are classified by those Chern numbers and we have a theorem:
\begin{theorem} \label{thm02}
	Let $\mf T$ triangulate an oriented closed surface. Then we can triangulate semi-simplicially over $\mf T$ any circle bundle with Chern number $c$ such that $$|{\boldmath c}| \leq \frac{1}{2}\# \mf T_2$$ When the equality holds, the triangulation can be only semi-simplicilal, not simplicial.
\end{theorem}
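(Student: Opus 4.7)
The plan is to read the statement through Theorem \ref{thm01}. Since $\mf T$ has dimension $2$, the coboundary condition on any $2$-cochain is vacuous, so every function $\alpha\colon \mf T_2 \to \{0,1\}$ automatically lies in $Z^2_\triangle(\mf T;\{0,1\})$. Theorem \ref{thm01} then says a circle bundle $p$ admits a semi-simplicial triangulation over $\mf T$ exactly when $c_1(p)$ has a binary representative. Writing the oriented fundamental class as $[\mf T] = \sum_{\sigma \in \mf T_2} n_\sigma\, \sigma$ with $n_\sigma \in \{\pm 1\}$, the Chern number produced by $\alpha$ is $c = \langle \alpha, [\mf T]\rangle = \sum_\sigma n_\sigma \alpha(\sigma)$, and with $N_\pm = \#\{\sigma : n_\sigma = \pm 1\}$ the set of realizable integer Chern numbers is $\{a_+ - a_- : 0 \le a_\pm \le N_\pm\}$, i.e.\ all integers in the interval $[-N_-, N_+]$.

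The crux of the first assertion is the balance identity $N_+ = N_- = \tfrac12\#\mf T_2$, which I would extract from the semi-simplicial coboundary formula. On any $2$-simplex one computes
\[
\delta\mathbf 1_1(\sigma) \;=\; 1 - 1 + 1 \;=\; 1,
\]
so the constant $2$-cochain $\mathbf 1_2$ equals the coboundary $\delta \mathbf 1_1$. Being a coboundary, it pairs to zero with the fundamental cycle, giving
\[
N_+ - N_- \;=\; \sum_\sigma n_\sigma \;=\; \langle \mathbf 1_2, [\mf T]\rangle \;=\; \langle \mathbf 1_1, \partial [\mf T]\rangle \;=\; 0.
\]
Combined with the first paragraph, every integer $c$ with $|c| \le \tfrac12\#\mf T_2$ can be written as $a_+ - a_-$ with $0 \le a_\pm \le N_\pm$, and the binary cocycle taking value $1$ on any chosen $a_+$ positively oriented and $a_-$ negatively oriented $2$-simplices of $\mf T$ realizes $c$.

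The sharpness claim is the step I expect to be the main obstacle. When $|c| = \tfrac12\#\mf T_2$, the binary representative is forced (depending on the sign of $c$) to be the indicator $\chi_{\{n_\sigma=+1\}}$ or $\chi_{\{n_\sigma=-1\}}$: every $2$-simplex of $\mf T$ must carry a nontrivial twist, and no slack is left. Under the refinement of Theorem \ref{thm01} (``necessary but not sufficient'' in the classical simplicial setting) the binary cocycle must further lift to a coherent local system of circular permutations on the vertex links. The obstruction I would pursue is that if every triangle incident to a vertex $v$ demands a twist, the required cyclic order around $\mr{link}(v)$ is over-determined and can be satisfied only by subdividing some segment of the fiber circle over $v$; such a subdivision forces the total space to be only semi-simplicial, not a simplicial complex. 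Making this star-of-a-vertex obstruction precise is where the circular-permutation framework developed in the body of the paper is essential.
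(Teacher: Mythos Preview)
Your argument for the range of realizable Chern numbers is essentially the paper's own proof: the identity $\delta\mathbf 1_1=\mathbf 1_2$ and the pairing with $[\mf T]$ is exactly the content of the paper's Lemma~\ref{ppar}, and your appeal to Theorem~\ref{thm01} is what the paper does implicitly by invoking Proposition~\ref{conc} (spindle contraction to a minimal bundle) together with the explicit construction of the minimal bundle from a binary cochain. So the first assertion is handled the same way.

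For the sharpness clause your instinct that this is the delicate point is correct, but your proposed mechanism is off. The local system of circular permutations is \emph{never} over-determined: by Proposition~\ref{hunt} any binary $2$-cocycle---including the extremal one $\chi_{\{n_\sigma=+1\}}$---determines a perfectly consistent minimal s.c.\ bundle. The obstruction to a \emph{classical} simplicial total space is not a coherence problem in the cyclic orders at all; it is the purely combinatorial requirement on necklaces recorded in Proposition~\ref{puresimp} (at least three beads of each color, and every pair of colors mixed). The paper does not carry out this analysis here either: it simply cites \cite[Lemma~0.1]{Mnev:2018} for the fact that the bundle with $|c|=\tfrac12\#\mf T_2$ cannot be classically simplicially triangulated over $\mf T$. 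If you want to reconstruct the argument, the right thread to pull is the ``mixing'' condition of Proposition~\ref{puresimp}, not an over-determination of cyclic orders around a vertex link.
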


The Theorem \ref{thm01} sounds like a certain discrete relative of another  Weil-Kostant theorem -- the theorem on the ``prequantum bundle" \cite[2.2]{Brylinski2008}, saying that to a simplectic form $\omega \in \Omega^2(M)$ having integer periods on a differential manifold $M$ corresponds a circle bundle on $M$ with connection form which curvature is $\omega$. Here the role of the simplectic form is played by the binary simplicial cocycle,
the role of a connection is played by a certain ``minimal" triangulation which can be associated to any triangulation up to choices using our ``spindle contraction trick".  Such a minimal triangulation has the associated  Kontsevich piecewise-differential connection form, providing the rational local formulas of \cite{MS}.  Its curvature symplectic  form integrated over the base simplices and shifted by the standard 2-coboundary $\frac{1}{2}$ is exactly the integer binary cocycle.

\p{}
 The Theorem \ref{thm01} is based on  an observation, trick, formula and an experimental fact emphasizing the central role of circular permutations in the subject. We will  describe the plan of paper.

\bigskip
\noindent
First, we need to collect in Section \ref{ss} some stuff on semi-simplicial sets,  its geometric realizations and PL topology.

\bigskip
\noindent
Then we pass in Section \ref{neck} to the observation.  The observation was central in \cite{MS} -- the stalk of triangulation of an oriented circle bundle over an ordered $k$-simplex is identified with  an oriented necklace whose beads are labeled by vertices of the base simplex $0,1,...,k$. The beads correspond to the maximal $k+1$-di\-men\-sional simplices in the stalk.  In this correspondence,  stalks of the minimal triangulation go to circular permutations of vertices of the base.  A minimally triangulated circle bundle corresponds to a local system of circular  permutations of the base ordered simplices. These local systems are combinatorial sheaves on the base semi-simplicial complexes and they have a representing (or classifying) object -- the simplicial set $\pmb  SC$ of all circular permutations.

\bigskip
\noindent
In Section \ref{spdl} we discuss a trick of ``spindle contracion" in the triangulation of a circle bundle. The trick is a bundle ``simple map" from \cite{waldhausen2013}, and in our case it reduces a triangulation of a circle  bundle over a fixed simplicial base to a minimal triangulation over the same base.

\bigskip
\noindent
In Section \ref{01} we introduce the universal  binary  Chern cocycle formula for minimally triangulated circle bundles. It is a form of the local formula from \cite{MS}.

\bigskip
\noindent
In Section \ref{hunti} we relate Huntington cyclic order axioms and the local binary formula for the Chern class.  Condition of axiomatic extension of a cyclic order appears to be  exactly a binary form of Chern cocycle. A very small calculation unfolds the coincidence.

\bigskip
\noindent
In Section \ref{prt1} we assemble the proof of Theorem \ref{thm01}.
By spindle contraction and the formula we associate with any triangulation of a circle bundle over $\mf B$ a binary Chern 2-cocycle. This provides the ``if" direction of the statement.   By Huntington's axiomatic, using a binary 2-cocycle  we construct a unique minimally triangulated circle bundle having the cocycle as its Chern cocycle, completing the ``only if" direction.

\bigskip
\noindent
In Section \ref{prt2} the proof of Theorem \ref{thm02} is assembled.

\p{} It is clear that the subject fits into the topic of crossed simplicial groups and generalized orders (see for example \cite{Kapranov2014}), but we postpone this aspect for further investigations.

\section{Simplicial and semi-simplicial sets and complexes. \label{ss}}
\p{}
Semi-simplicial sets with  singular morphisms added were introduced in \cite{RSI} under the cryptic name ``ndc css" and show up in literature under random names. For example they are called ``trisps" in \cite{Kozlov2008}. Acknowledging the serious historical mess in the terminology we call them ``semi-simplicial complexes", due the same good and in some aspects better behavior as locally ordered classical simplicial complexes. One can imagine  the category of semi-simplicial complexes one as a subcategory of simplicial sets which  has the best possible behavior of its \textit{core} -- the set non-degenerate simplices relatively to maps. They have all finite limits and  useful colimits commuting with them.  The core of limits has an expression using Eilenberg-Zilber order product of simplices.  Kan's second normal subdivison functor $\on{Sd^2}$ acts functorially, producing classical simplicial  complexes with homeomorphic
geometric realization.  Therefore they have an  associated functorial PL structure on geometric realizations in finite case.  To summarize: singular morphisms of semi-simplicial complexes can be used to encode combinatorially PL maps of PL-polyhedra, for example PL fiber bundles.
The category of  semi-simplicial complexes has a natural Grothendieck topology generated by coverages   by non-degenerate simplices.
Generally all the cellular sheaf theory as in \cite{Curry2013} works similarly for semi-simplicial complexes. The site structure in the finite case is actually a generalization of P.S. Alexandroff non-Hausdorff topology on abstract classical simplicial complexes.

\p{} We denote $\N$ the category of finite linear orders $[k]=\{0,1,2,...,k\}$ and non-decreasing maps between them called operators. Injective maps are boundary operators, surjective - degeneracy operators.

\bigskip
Set-valued pre\-sheaves on $\N$  are simplicial sets. The category of simplicial sets denoted by $\hat{\N}$. For a simplicial set $\N^\op\xar{\pmb X} \on {\pmb{\mc S \mr{ets} }}$, elements of $\pmb X_k$ are called $k$-simplices. For a boundary operator $[m]\xar{\mu} [k]$ and a simplex $x \in \pmb X_k$, the  $m$-simplex $\mu^*(x)\in \pmb X_m$ is called the $\mu$-th boundary of $x$. The same for degeneracies.

\p{}

\bigskip
A part of category $\N$ generated by all injective maps denoted by $\NN$. Set-valued presheaves on $\NN$ are called semi-simplicial sets. They form the category $\hat \NN$.

\bigskip
 One can make from a simplicial set a semi-simplicial set by forgetting all the degeneracies. This provides a functor $\hat \N \xar{F} \hat{\NN}$ having left adjoint functor $S$.  The  theory of semi-simplicial complexes is based on the Rourke-Sanderson adjacency $S \dashv F$. Functor $S$ freely adds degeneracies to a semi-simplicial set making it a simplicial set. Completing the image of $S$ to a full subcategory in $\hat \N$ we obtain a full subcategory  $\tilde \NN$  of $\hat \N$ --  the category which we call the\textit{ category of semi-simplicial complexes}. 

 \bigskip
 We denote by $\la m\ra \xar{\la \mu \ra} \la k \ra$ the images of orders and operators under Yoneda embedding. We imagine them
as standard face and degenerecy maps of ordered abstract simplices. The Yoneda images of $\NN,\N$  belong to the $\tilde \NN$.

\bigskip
The category of singular morphisms $\on{Arr} \tilde \NN$ is the convenient category for triangulations of bundles by geometric realization.

\section{Triangulations and necklaces.} \label{neck}
Here we will repeat a few points from \cite{MS} in a way convenient for the current exposition.
\p{Triangulations of circle bundles.} Suppose we have a finite semi-simplicial complex $\pmb B$ and an  oriented  circle bundle $S^1 \xar{} E \xar{} |\pmb B | $  triangulated over $\pmb B$. I.e. a semi-simplical complex $\pmb B $ and a singular map $\pmb E \xar{\pmb p} \pmb B $ of semi-simplicial complexes for which there  exists a
homeomorphism $h$ making the diagram
commutative:
\begin{equation}
\begin{tikzcd}[column sep=small]
{|\mf E|} \arrow[dr,"|\mf p|"] \arrow[rr,"h"] & & E \arrow[dl,"p"] \\
& {|\mf  B|}                  &
\end{tikzcd}
\end{equation}
Homeomorphism  $h$  creates on $p$ the
structure of a PL oriented circle bundle. Any
two such  homeomorphisms create fiberwise PL
isomorphic structures. Moreover: over a PL polyhedral
base, the oriented $S^1$ bundles understood as
principle $U(1)$-bundles or as  oriented PL
fiber bundles are the same thing. On the total space $E$
one can always choose an interior flat  Euclidean metric making all
the fibers of $p$ of constant perimeter
($2\pi$ or $1$ or whatever makes   formulas
nicer). This will miraculously turn an oriented PL
$S^1$ bundle $p$ into $U(1)$ principal
bundle $p$ also in a unique up to $\U$-gauge
transformation way. Therefore if  $h$ exists,
than the  combinatorics of the map $\pmb p $
determines isomorphism the class of an $S^1$
bundle and hence its  Chern  class $c_1(p)\in H^2(B;\Z)$ in the
base,  by Weil-Kostant theorem.
\p{Simplicial circle bundle.}
Picking a base $k$-simplex  $\la k \ra \xar{x} \pmb B$ we can form a stalk of $\pmb p$ over  $x$ -- the pulback
$x^* \pmb E \xar{x^* \pmb p}\la k\ra $ -- an  \textit{elementary s.c. bundle} over a simplex. The bundle $p$ is oriented. The orientation  fixes a generator in the first integer simplicial homology group of the total complex $x^* \pmb E$. Simplicial boundary transition maps between the stalks of $\pmb p$ send generator to generator, representing the orientation as a constant local system on the base $\pmb B$ associated with the triangulation.
We call a \textit{simplicial circle bundle} (\textit{s.c. bundle}) on a semi-simplicial complex $\pmb B$ a local system of oriented elementary s.c. bundles   on $ \pmb B$ and orientation preserving transition boundary maps. It assembles by colimit in the category $\on{Arr} \tilde\NN$ of  singular morphisms to a map $\pmb E \xar{} \pmb B$ having the canonical  structure of a PL triangulated  $S^1$ bundle on the geometric realization (if $\mf B$ is finite) with a canonical structure of $\U$-principal bundle. (We are in a simple situation of a stack where elementary s.c. bundles and transition boundary maps are the ``descent data")

\p{Necklace of an elementary s.c. bundle.}

Now let $\pmb R \xar{\pmb e} \la k \ra$
be an elementary s.c. bundle over $\la k\ra$ having $n\geq k+1$ maximal $k+1$-dimensional simplices in the total complex $\pmb R$.
  The semi-simplicial bundle $\pmb e$ is determined by an oriented necklace $\mc N(\pmb e)$ whose $n$ beads are colored by vertices of the base simplex, i.e. the numbers $\{0,...,k\}$.
 Figure \ref{eb} presents a picture of an elementary s.c. bundle over the 1-simplex $\la 1\ra$.
 \begin{figure}[h!]
 	\captionsetup{font=small,width=0.8\textwidth}	
 	\begin{center}
 		\includegraphics[width=3.0in]{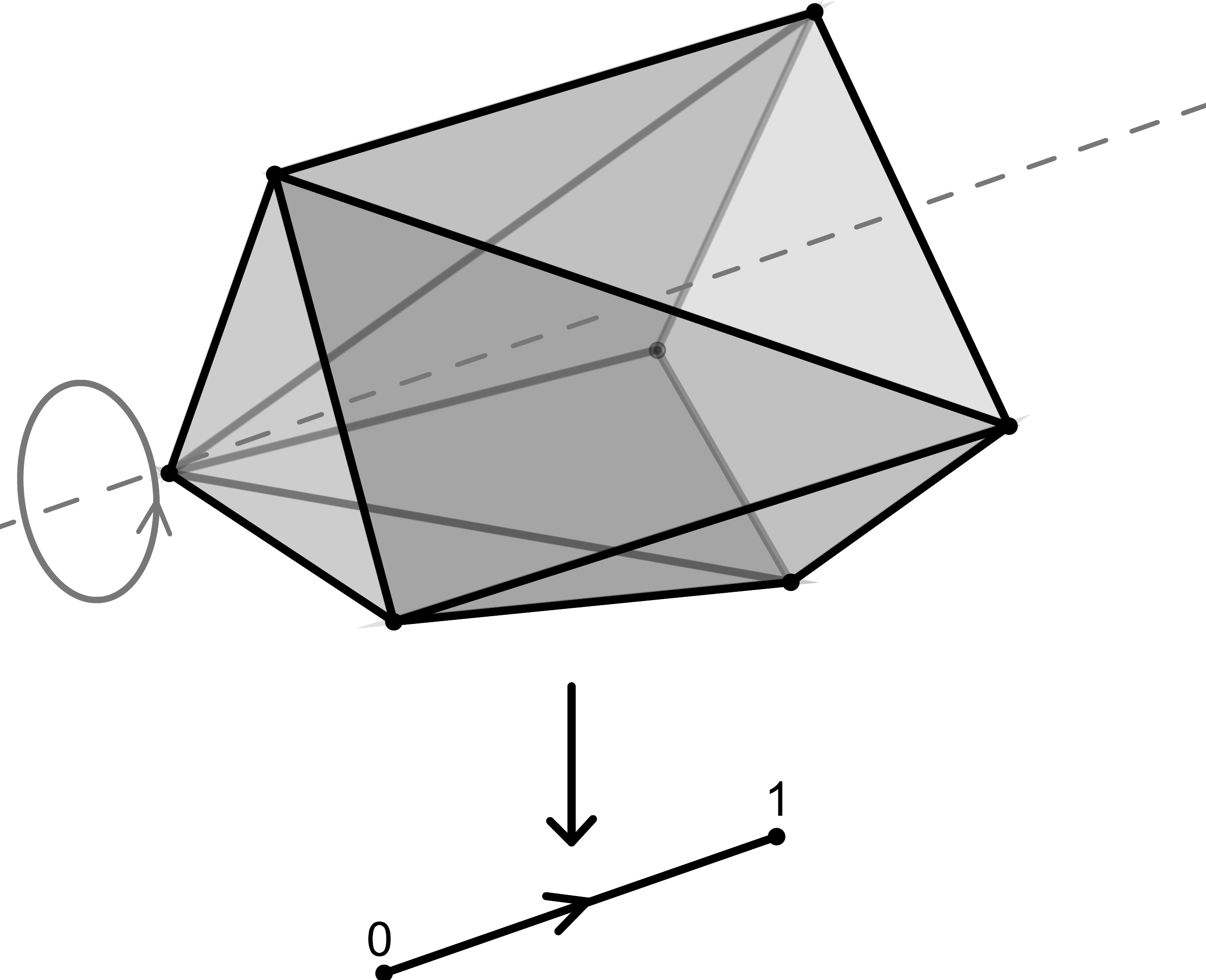} 
 		\caption{\small Elementary simplicial circle bundle \\ over the interval. \label{eb}}
 	\end{center}
 \end{figure}
 To an elementary	 simplicial circle bundle $\mf e$ over $\la k \ra$ having $n$ maximal $k+1$-dimensional simplices in the total space,   we associate a $k+1$-necklace $\mc N (\mf e)$, i.e. a length $n$ circular word  in the ordered alphabet of $k+1$ letters numbered by the vertices of the base simplex. Any $k+1$ - dimensional simplex of $\mf R$ has a single  edge which shrinks to a vertex $i$ of the base simplex by Yoneda  simplicial degeneration $\la k+1 \ra \xar{\la \sigma_i\ra} \la k\ra, i=0,1,2,...,k$.  Take the general fiber of the projection $|\mf e|$. It is a circle broken into $n$ intervals  oriented by the orientation of the bundle, and every interval on it is an intersection with a maximal $(k+1)$-simplex. The maximal simplex is uniquely named by a vertex of the base where its collapsing edge collapses.  This creates a coloring of the $n$ intervals by $k+1$ ordered vertices of base simplex.  Thus we got a necklace $\mc N (\mf e)$ out of the combinatorics of $\mf e$ (\cite[\S 16]{MS}).  The process is illustrated on Fig \ref{eb2}. The process is invertible: having an oriented  necklace $\vartheta$
 whose beads are colored by $[k]$ we can assemble an elementary oriented s.c. bundle $\pmb EC(\vartheta)\xar{\pmb ec(\vartheta)} \la k \ra$ as a colimit in $\tilde \NN \downarrow_{\la k \ra}$ (or $\hat \N \downarrow_{\la k \ra}$) of Yoneda degeneracies $\la k+1\ra\xar{\la \sigma_i\ra} \la k \ra$.

 \begin{figure}[h!]
 	\captionsetup{font=small,width=0.8\textwidth}	
 	\begin{center}
 		\includegraphics[width=5.0in]{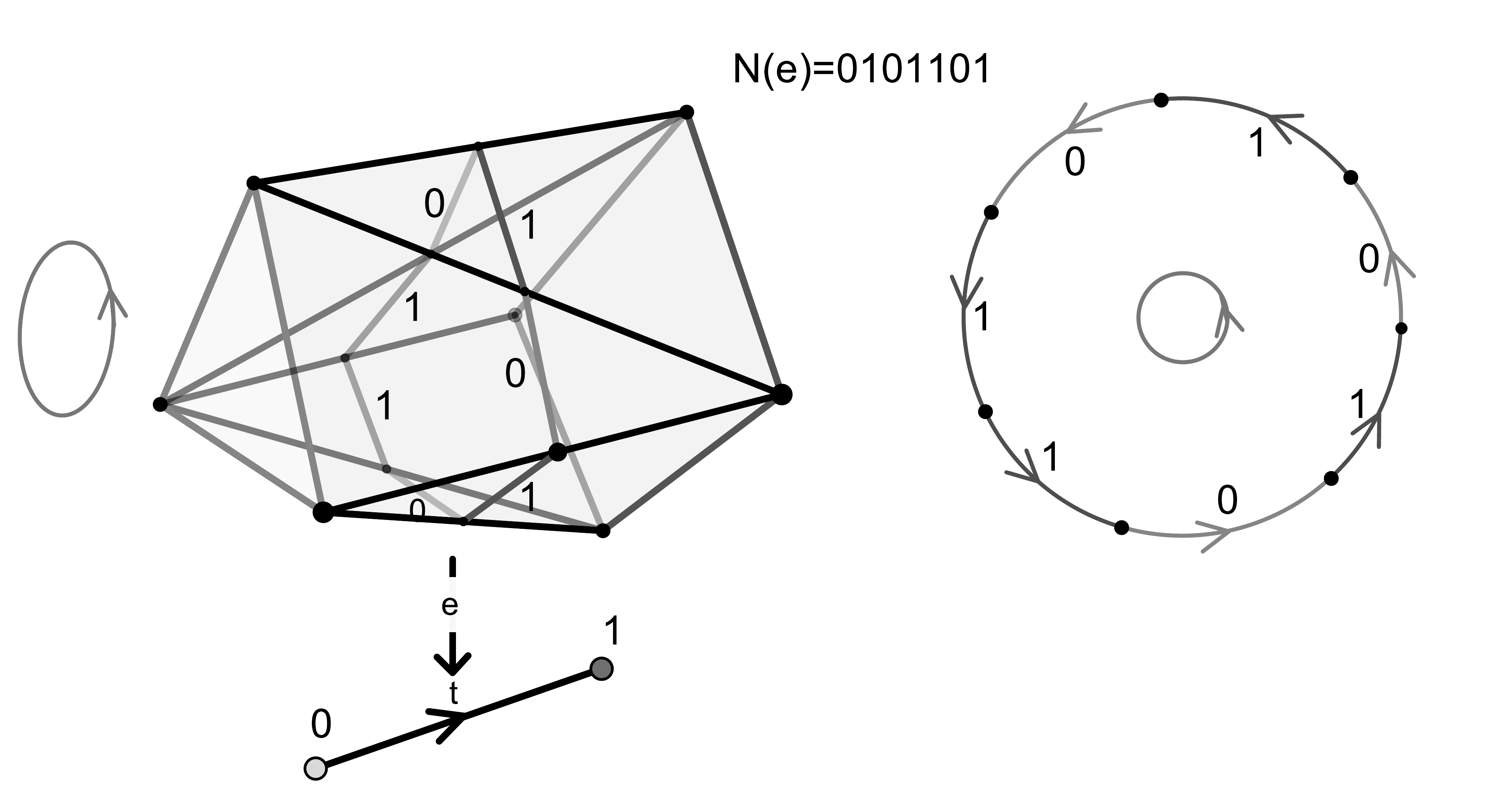}
 		\caption{ \label{eb2}}
 	\end{center}
 \end{figure}

\p{Local systems of oriented necklaces.} \label{neckloc}
Let   $\pmb E \xar{\pmb p} \pmb B $ be an s.c.  bundle, $\la k\ra \xar{x} \pmb B$ a simplex of base, $x^*\pmb E \xar{ x^*\pmb p } \la k \ra$ the corresponding subbundle and $\la k-1 \ra \xar{d_i x} \pmb B$ - $i$-th boundary of the simplex $x$. Then by construction the necklace $\mc N (d_i x)^* \pmb p$ is obtained from the necklace  $\mc N(x^* \pmb p)$ by deleting all the beads colored by $i$.
 But the face maps between  elementary subbundles contain more information, since the elementary bundles and the correspondent necklaces may have combinatorial automorphisms. Therefore they should be recorded in the descent data of the bundle. After this  fix,  the bundle $\pmb p$ is encoded in the  local system $\mc N(\pmb p)$ of  oriented necklaces   made of ordered vertices of the  base simplices and vise versa a local system $\Theta$ of necklaces on the base $\pmb B$  encode the bundle $\pmb EC(\Theta) \xar{\pmb ec (\Theta)} \pmb B$.

\p{Classical simplicial vs semi-simplicial triangulations.} Not every oriented necklace $\vartheta$ with beads colored by $[k]$ has as $\pmb EC(\vartheta)$ a classical simplicial complex. We say that $\vartheta$ has two colors $i,j \in [k]$  ``not mixed"  if, after deleting from $\vartheta$ all the  beads except those colored by $i,j$,  the remaining two sorts of  beads stay in two solid blocks (see proof of \cite[Lemma 0.1]{Mnev:2018}).
\begin{prop} \label{puresimp}
The complex  $\pmb EC(\vartheta)$ is classically simplicial iff $\vartheta$ has\\
1)  no less than 3 beads of each color and \\
2) any two pair of its colors are ``mixed".\\
\end{prop}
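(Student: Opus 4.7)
The plan is to reduce classical simpliciality of $\pmb{EC}(\vartheta)$ to a fiberwise analysis over the faces of the base $\la k\ra$. First, from the colimit definition one verifies that for every non-empty subset $S\subseteq[k]$ spanning the sub-simplex $\la S\ra\hookrightarrow\la k\ra$, the preimage under $\pmb{ec}(\vartheta)$ is canonically isomorphic to $\pmb{EC}(\vartheta|_S)$, where $\vartheta|_S$ is the restricted necklace obtained by deleting all beads whose color lies outside $S$. Because the projection $\pmb{ec}(\vartheta)$ is determined on vertices by their labels, any two simplices with identical vertex sets must project to the same base face and hence lie in a common sub-bundle. Classical simpliciality of $\pmb{EC}(\vartheta)$ is therefore equivalent to classical simpliciality of each sub-bundle $\pmb{EC}(\vartheta|_S)$.

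This pushes the claim to the cases $|S|=1$ and $|S|=2$. For $S=\{i\}$ the sub-bundle is the cycle assembled from the $n_i$ vertical edges of the bead-$i$ maximal simplices; it is a classical simplicial circle exactly when $n_i\geq 3$ (otherwise one gets a double-edge or a loop), which is condition~(1). For $S=\{i,j\}$, I would enumerate the blocks of the two-color sub-necklace $\vartheta|_{\{i,j\}}$: writing $p$ for the common number of $i$- and $j$-blocks and indexing the vertex-fibers above $i$ and above $j$ cyclically, the $n_i+n_j$ edges above the base edge acquire explicit vertex sets of the form $(A^i_\alpha, A^j_\beta)$ with block-boundary indices $\alpha\in\Z/n_i$ and $\beta\in\Z/n_j$. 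A short bookkeeping check shows that these vertex sets are pairwise distinct precisely when there are at least two blocks of each color — that is, when the pair $(i,j)$ is mixed. In the non-mixed case $p=1$ the two cyclic boundaries between the unique $i$-block and the unique $j$-block produce two distinct transition edges joining the same pair of ``pivot'' vertices, yielding a multi-edge.

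For $|S|\geq 3$ no further obstructions arise: conditions~(1) and~(2) pass verbatim to every restricted necklace $\vartheta|_S$, so once every 0- and 1-dimensional sub-bundle is classically simplicial, the fiberwise reduction forces every higher-dimensional sub-bundle to be simplicial as well, and hence so is $\pmb{EC}(\vartheta)$. The main obstacle is the $|S|=2$ enumeration — pinning down precisely when two block-boundary edges coincide in vertex set and seeing that mixedness is exactly the threshold. The remainder of the argument is routine combinatorial bookkeeping of cyclic block indices.
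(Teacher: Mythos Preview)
Your proof follows essentially the same route as the paper's: reduce classical simpliciality of $\pmb{EC}(\vartheta)$ to its $1$-skeleton, which sits over the $1$-skeleton of the base, and then analyze the fibers over base vertices (yielding condition~(1)) and over base edges (yielding condition~(2), the ``mixed'' criterion). The paper states the $1$-skeleton reduction as a single sentence and defers the edge analysis to \cite{Mnev:2018}, whereas you unpack both steps with an explicit block enumeration of the two-color sub-necklace; the underlying argument is the same.
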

\begin{proof}
A semi-simplicial complex is classically simplicial iff its 1-dimen\-sional skeleton is classically simplicial, i.e is a graph without loops or multiple edges. The entrie 1-dimen\-sional skeleton of $\pmb E C (\vartheta)$ sits over the one-dimensional skeleton of the base simplex $\la k\ra$. The condition 1) guarantees that there are no loops or multiple edges in circles over vertices, the conditon 2) guarantees that there are no multiple edges in the total complex over the base edges (see proof of \cite[Lemma 0.1]{Mnev:2018}) ).
\end{proof}
If conditions of the Proposition \ref{puresimp} do not hold, then the  complex $\pmb EC(\vartheta)$ is essentially semi-simplicial, and some of its simplices can have glued vertices or two different 1-simplices have both vertices in common.
\p{Circular permutations and minimal elementary s.c. bundles.} \label{cm}
We arrived to our main objects.

\bigskip
\noindent
 If the oriented necklace $\vartheta $ has a single bead of every color from $[k]$ then $\vartheta$ is a \textit{circular permutation of $[k]$}. We denote by $\pmb SC$ the $\mathbb N$-graded set of circular permutations.

\bigskip
\noindent
 Denote  by $\pmb S$ the  $\mathbb N$-graded set with $\pmb S_k$ -- symmetric group of all permutations of $[k]$. A circular permutation in $\pmb SC_k$ is the same as the right coset of a permutation $\omega \in \pmb S_k$ by the right action of the cyclic subgroup $\pmb C_k$ of $\pmb S_k$. Thus we have a map of graded sets $\pmb S_k \xar{\ca} \pmb SC_k$ sending a permutation to its right cyclic coset, i.e. to a circular permutation.

\bigskip
\noindent
 Let us organize the correspondence $\ca$ in a way comparable with boundaries of necklaces from p. \ref{neck}.\ref{neckloc} and also add degeneracies, making $\ca$ a morpism of simplicial sets.

\bigskip
\noindent
 First add boundaries and degeneracies to the graded set $\pmb S$. Define  boundaries
$ \pmb S_k \xar{d_i} \pmb S_{k-1},i=0,...,k $ by deleting the element $\omega(i)$ from permutation $\omega$ followed by monotone reordering. Thus $\pmb S$ became a semi-simplicial set.

\bigskip
\noindent
Define degeneracies $\pmb S_k \xar{s_i} \pmb S_{k+1}, i =0,...,k$ by inserting  into a permutation a new element right after $\omega(i)$ on $\omega(i)+1$'st  place, with the value $\omega(i) + \frac{1}{2}$  and monotone reordering the values  to natural numbers.
Now  $\pmb S$ is a simplicial set.  This simplicial set of permutations $\pmb S$ is a classical object called the ``symmetric  crossed simplicial group", introduced independently  in \cite{FT1987}, \cite{Kras1987} and later in \cite{FL1991}.

\bigskip
\noindent
The map $\ca$ induces the similarly defined simplicial structure on $\pmb SC$ making the map $\ca$ simplicial.

\bigskip
\noindent
Now make the following definition
\begin{itemize}

\item For a circular permutation $\vartheta  \in \pmb SC_k $, we call \textit{minimal} the elementary s.c. bundle
$$\pmb E C(\vartheta) \xar{\pmb e c(\vartheta)} \la k \ra$$  We call minimal a s.c. bundle if all its stalks over simplices are minimal.
\end{itemize}
    Circular words and corresponding  bundles have no automorphisms. Therefore a bundle having minimal its stalks over all simplices is the same as a local system of circular permutations of the base simplices and its simplicial boundary maps. It is the same as a simplicial map  $\pmb B \xar{} \pmb SC$.

\bigskip
\noindent
We arrived to the point that the functor on $\tilde \NN$ assigning to a semi-simplicial set $\pmb B$ the set of all minimally triangulated circle bundles over $\pmb B$ is represented by $\pmb SC$.

\bigskip
\noindent
Actually, a minimal elementary s.c. bundle $\pmb ec(\vartheta) $ is the stalk of the simplicial map $\pmb S \xar{\ca} \pmb SC$ over the base simplex $\la k \ra \xar{\vartheta} \pmb SC$. 	Therefore $\ca$ is the universal minimal s.c. bundle over $\pmb SC$.  We don't prove  this fact in this paper.

\p{Geometry of minimal elementary s.c. bundles.}
Minimal elementary s.c. bundles are the same thing as the twisted product projection  $\pmb C \times_{\omega} \la k \ra \xar{} \la k \ra$ where $\pmb C$ is  Connes' cyclic crossed simplicial group or ``simplicial circle" $\pmb C$, $\omega \in \pmb  S_k$ is a permutation of the base vertices.

 Now we will  describe elementary minimal s.c. bundles
  geometrically  (see Figure \ref{ibund}, Figure \ref{021}).
Let $\omega \in \pmb S_k$ be a permutation and $\c{\omega} \in \pmb SC_k$ - the corresponding circular permutation. We construct the elementary s.c. bundle
 $$\pmb EC\c{\omega}\xar{\pmb ec\c{\omega}}\la k \ra$$
 by the following algorithm.
 Take the geometric prism $\Delta^k \times \Delta^1 \subset \R^k \times \R^1$, and number its verities by $[k]\times[1]$. Then apply the algorithm:
  on step 0 make a $k+1$-simplex which is
  convex hull of the bottom $k$-simplex with vertices $((0,0),...,(0,k))$ and the point
  $(\omega(0),1)$. The result  will have the top $k$-simplex with vertices
  $$(\omega(0),1),(\omega(1),0),...,(\omega(k),0)$$
  Then iterate, building the pile of $k+1$-simplices.
  On step $i$,
  add a $k+1$ simplex which is the convex hull of the point $(\omega(i),1)$ and the top $k$-simplex in the already   constructed pile. It is a very simple sort of ``shelling" process in simplicial combinatorics. On the step $k$ we will obtain a certain triangulation $\pmb E(\omega)$ of the prism $\Delta^k \times \Delta^1$. At the last step of the construction of $\pmb E C\c{\omega}$ -- the bundle corresponding to circular word $\c{\omega}$ -- we glue together the very top and the very bottom $k$-simplices.  It is possible to do  only semi-simplicially. The general  fiber of the projection intersects the interiors of $k+1$-simplices in the circular order $\c{\omega}$ and we could start from any cyclic shift of the word $\omega$ with the same result.
\begin{figure}[hbt]
	\captionsetup{font=small,width=0.8\textwidth}	
 \begin{center}
 \
  \includegraphics[width=1.5in]{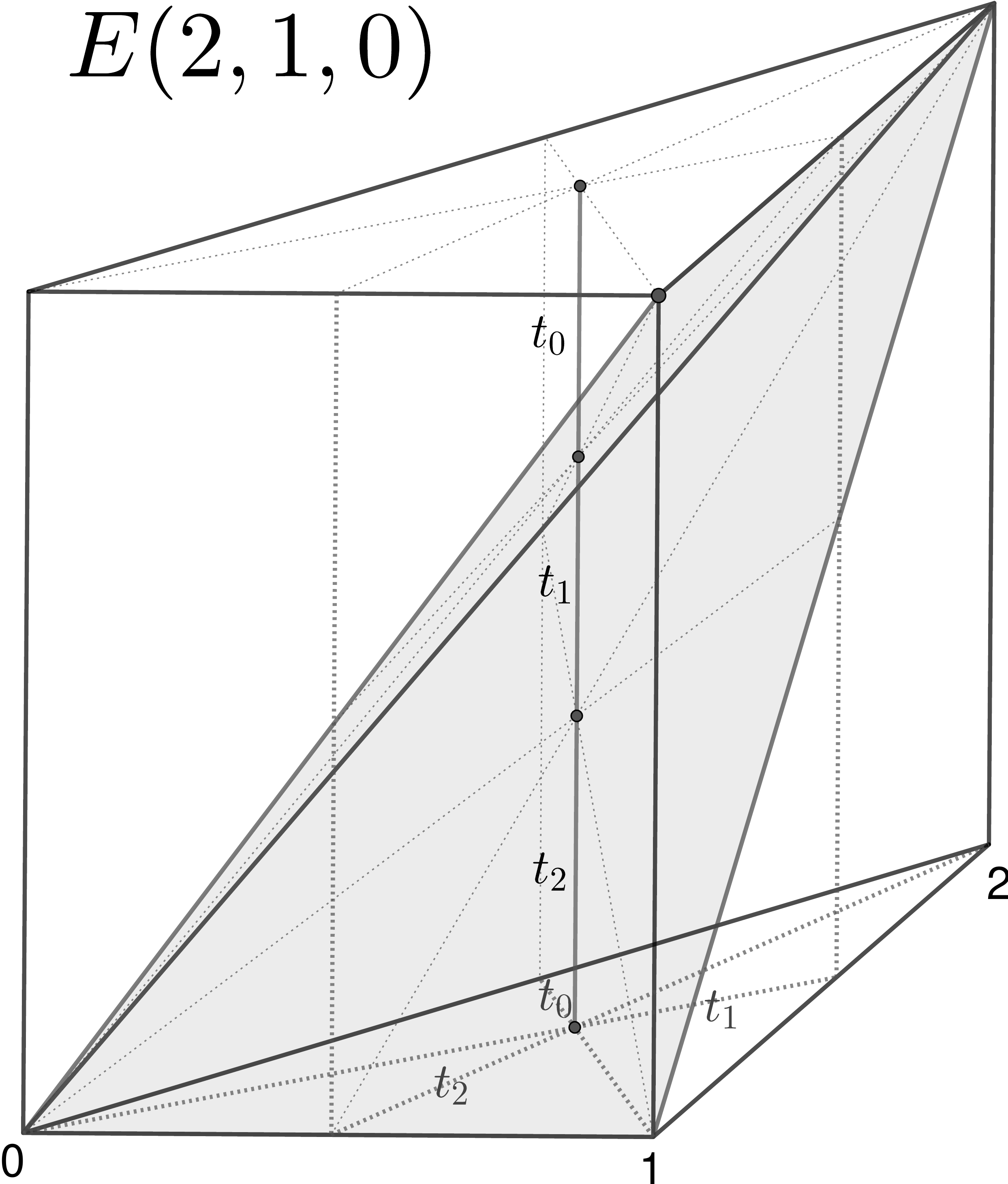}
  \includegraphics[width=1.5in]{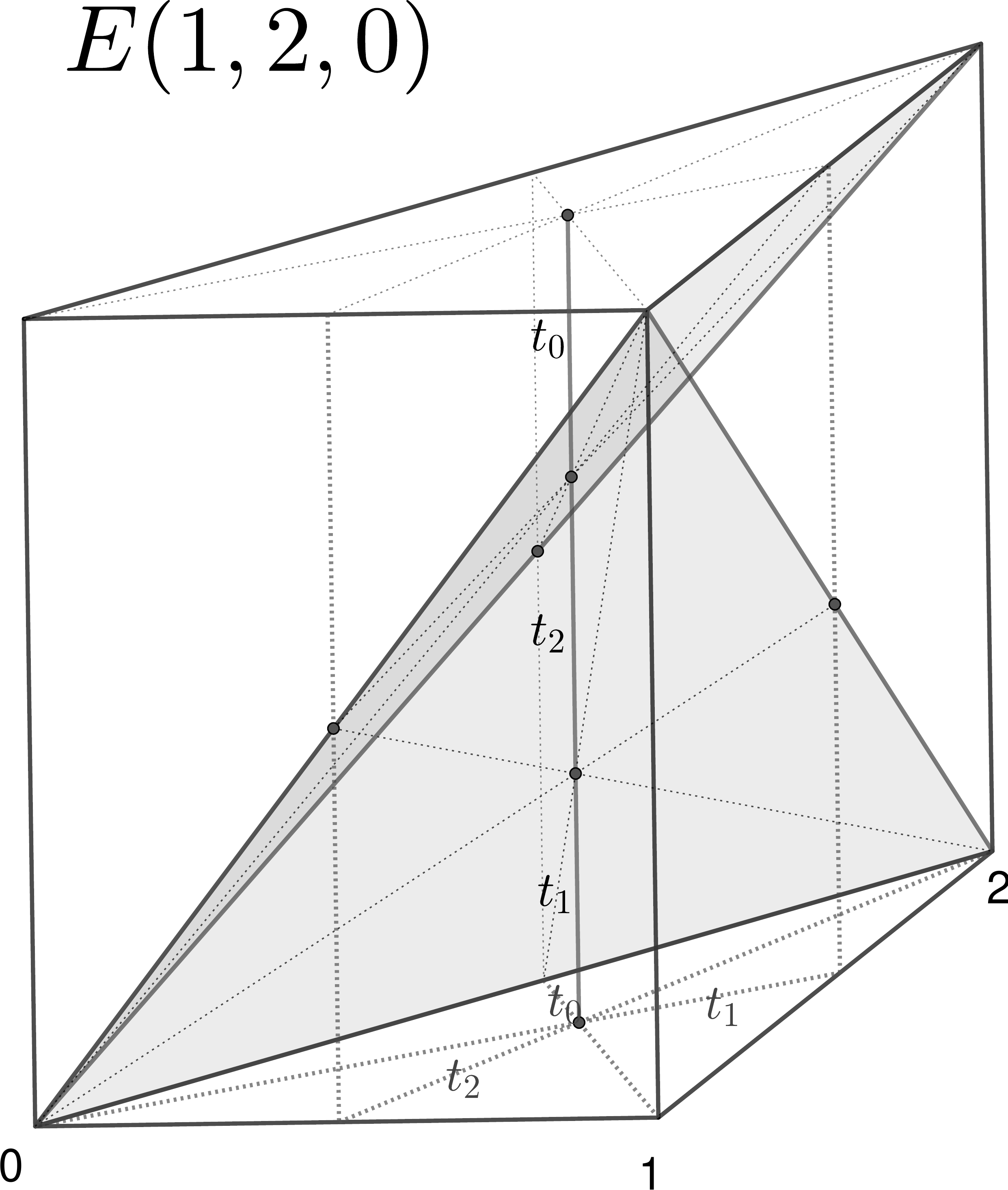}
  \caption{ \label{ibund}}\end{center}
\end{figure}
\begin{figure}[hbt] \label{021}
	\captionsetup{font=small,width=0.8\textwidth}	
	\begin{center}
		\includegraphics[width=1in]{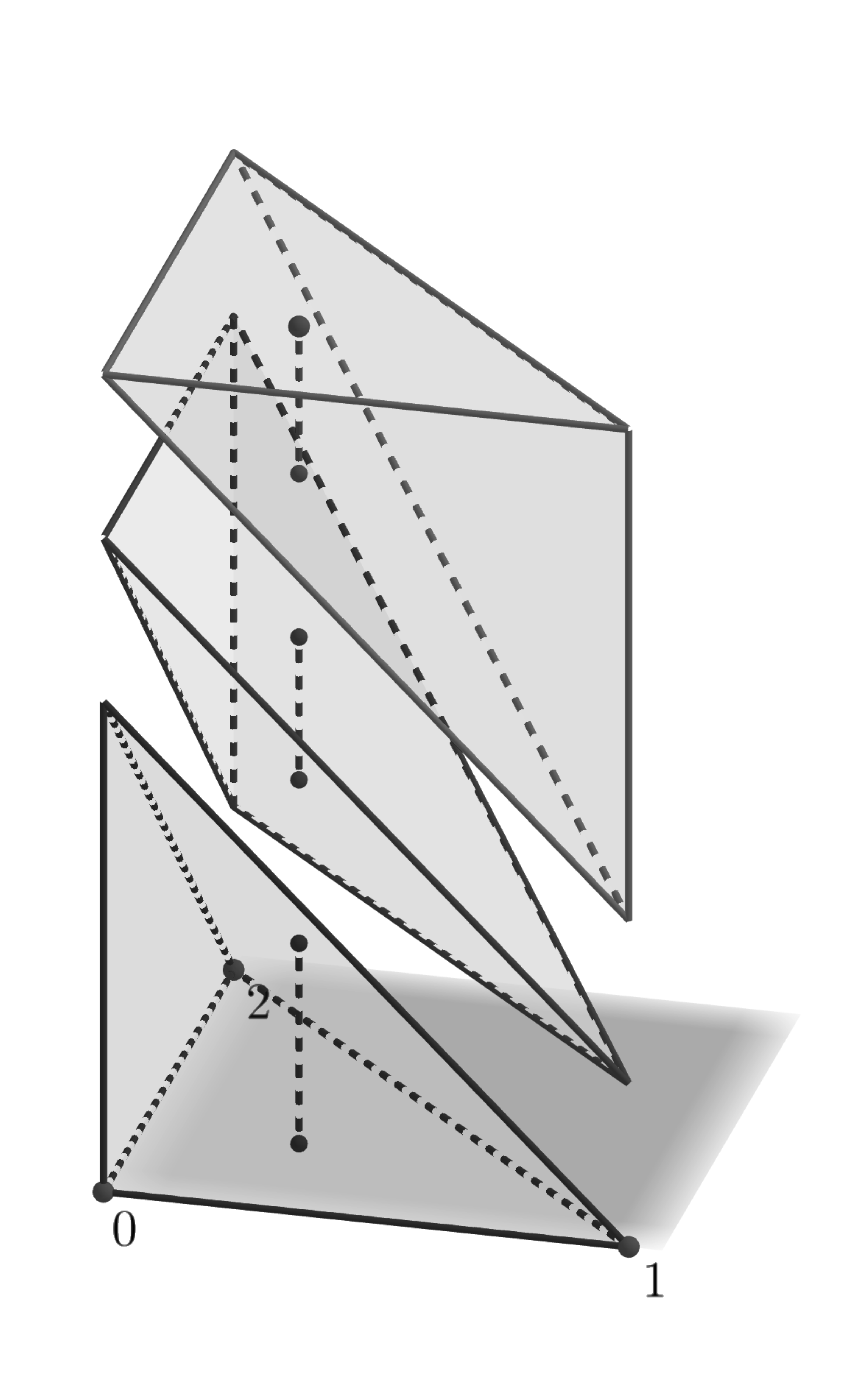}
		\includegraphics[width=1in]{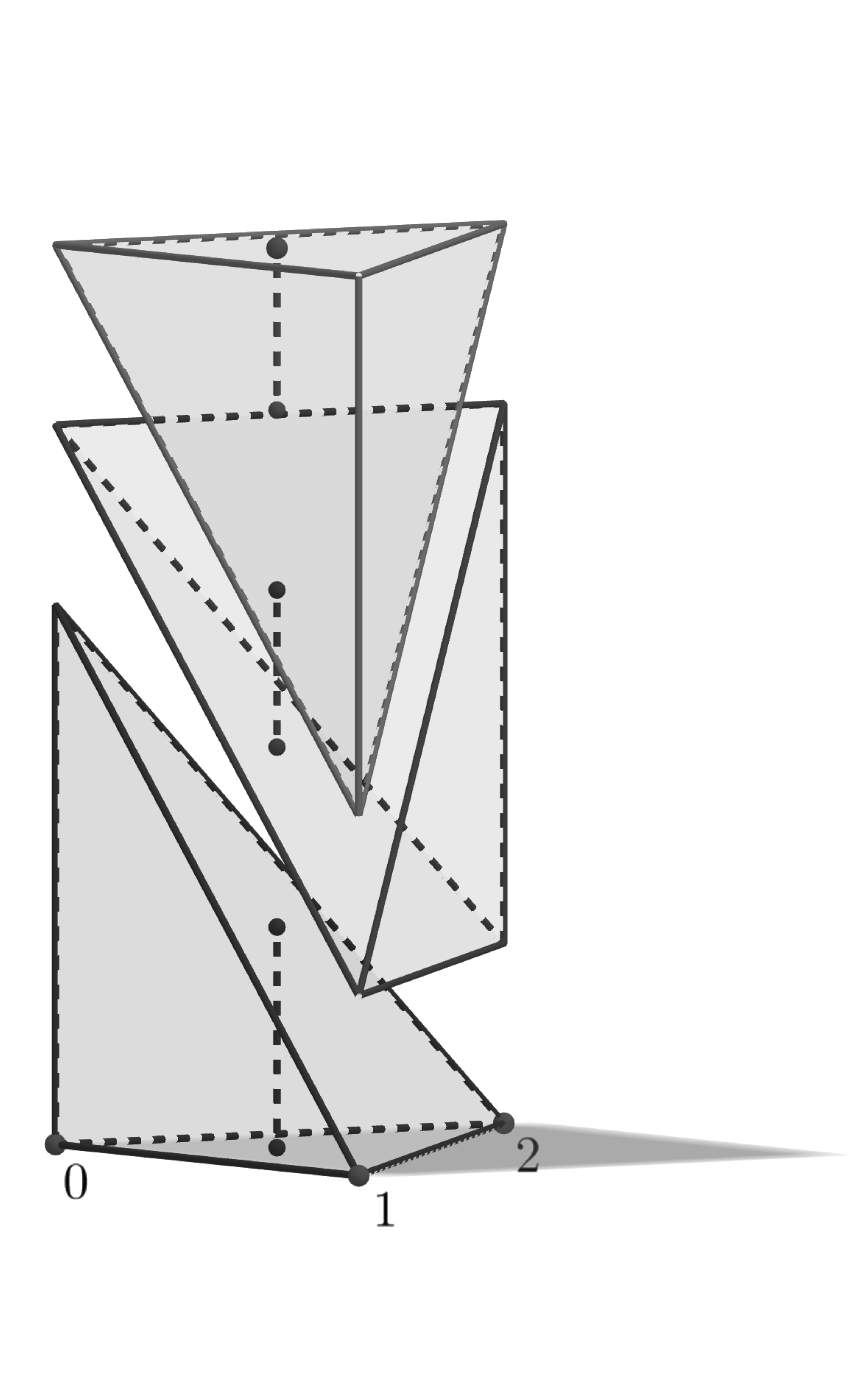}
		\includegraphics[width=1in]{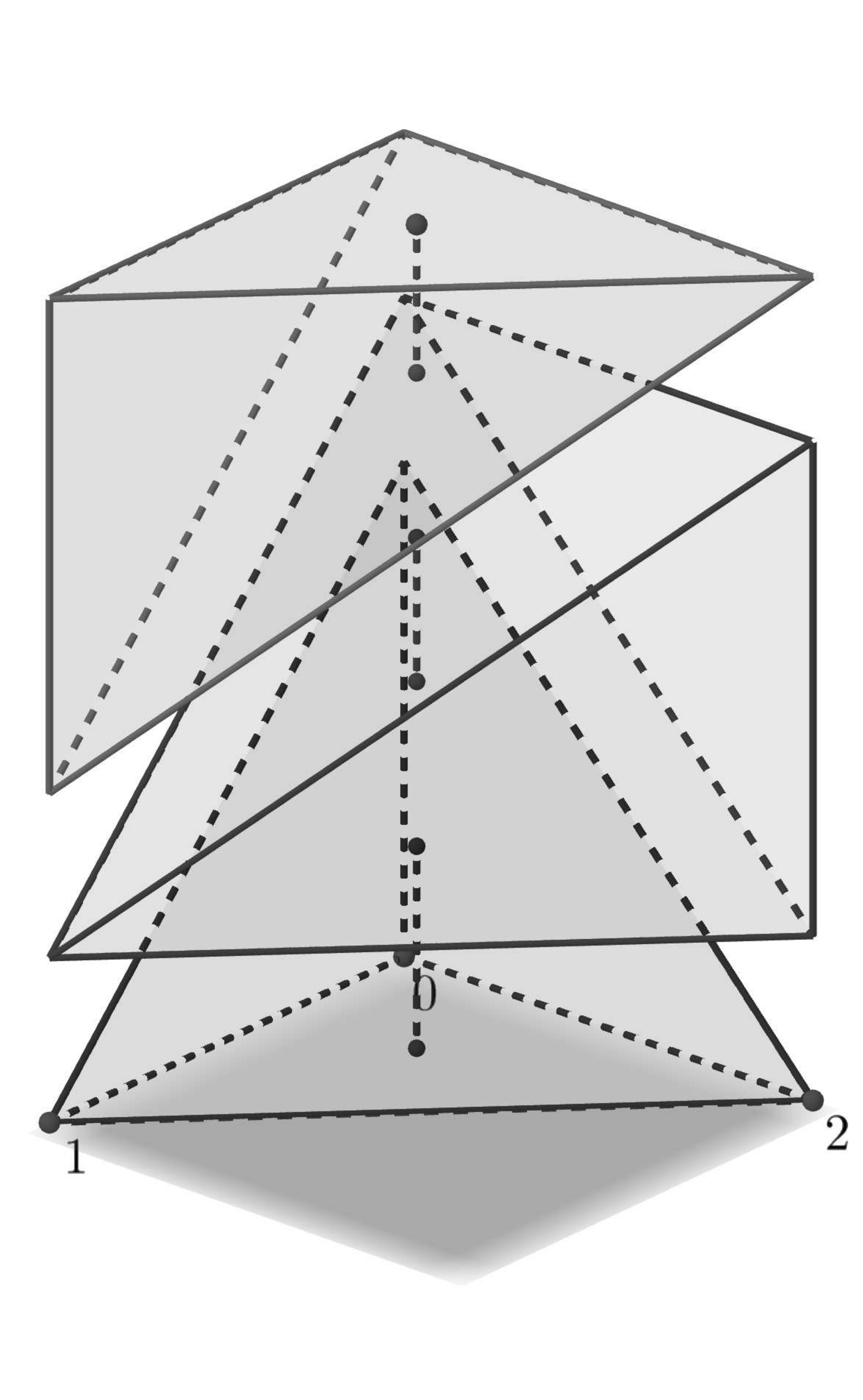}
		\caption{$E(0,2,1)$\label{021}}\end{center}
\end{figure}

\bigskip
\noindent
The most important for us are the circular permutations of $[0],[1],[2],[3]$  and corresponding minimal elementary s.c. bundles.
We have:
\begin{itemize}
\item single circular permutation of one element $\c{0}$;
\item  single circular permutation of two elements  $\c{0,1}$;
\item 2 circular permutations of three elements: even $\c{0,1,2}$ and odd $\c{2,1,0}$;
\item 6 circular permutations of  4 elements.
\end{itemize}
Those faces and boundaries  form  the  skeleton $\pmb SC(3)$ of $\pmb SC$ depicted as a hexagram on Figure \ref{hex}.  	
\begin{figure}[hbt]
	\captionsetup{font=small,width=0.8\textwidth}
	\centering
	\includegraphics[width=4in]{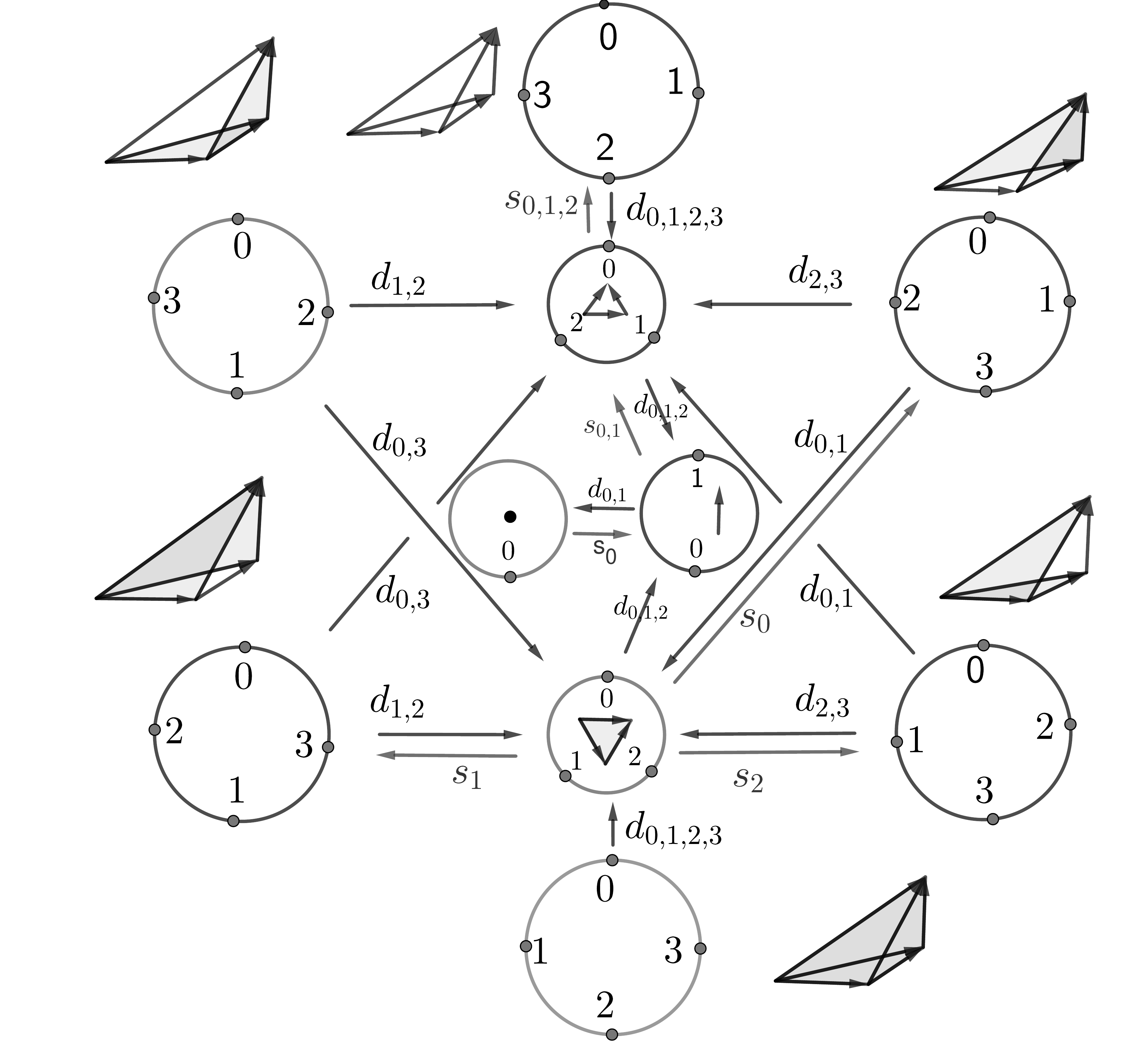}
	\caption{Skeleton $\mf S C (3)$ depicted as a hexagram. All the circles are oriented clockwise.  \label{hex}  }
\end{figure}
\section{Spindle contraction trick. } \label{spdl}
Here we will show how  up to a free choice,  one can reduce any s.c. bundle  to a minimal one preserving the bundle isomorphism class of the geometric realization.
In PL topology concordant fiber bundles are isomorphic and vice versa.
  We call two  s.c. bundles $\pmb p, \pmb q$ on $\pmb B$ strongly concordant
if there is a s.c. bundle $\pmb h$ on $\pmb B \times \la 1 \ra$ such that its restrictions on $\pmb B\times \la 0\ra$ and $\pmb B\times \la 1 \ra$ are $\pmb p$ and $\pmb q$.
Geometric realizations of concordant bundles are isomorphic.  We call by {\em circles of the s.c. bundle $\pmb p$} its 0-stalks over vertices $\pmb B_0$ of $\pmb B$.  The circle of $\pmb p$ is a semi-simplicial oriented circle consisting from vertices and oriented arcs contracting to a vertex of the base by the  bundle projection $\pmb p$.
\begin{prop}\label{conc}
Any  s.c. bundle is strongly  concordant to
a minimal s.c. bundle, uniquely  determined by a free choice  of a single arc in every circle of the bundle.
\end{prop}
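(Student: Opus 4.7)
\begin{Proof}[Proof proposal]
The plan is to construct the concordance explicitly in three stages: build the target minimal bundle from the arc choices, build an elementary prism-bundle over each base simplex, and glue these together using the local-system formalism of Section \ref{neck}.

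First, from the arc choices I would produce the minimal target $\mf p^{\min} : \mf E^{\min} \to \mf B$ as a morphism $\mf B \to \pmb{SC}$. For any base simplex $x : \la k \ra \to \mf B$, the necklace $\mc N(x^*\mf p)$ has its $i$-colored beads in canonical bijection with the arcs of the circle $\mf p^{-1}(x(i))$; the chosen arcs $a_{x(0)}, \ldots, a_{x(k)}$ therefore single out one bead of each color, and the induced sub-necklace $\vartheta_x \in \pmb{SC}_k$ is a circular permutation. Since necklace boundaries act by deleting one color (Section \ref{neckloc}) and the selection rule $v \mapsto a_v$ depends only on the vertex, the assignment $x \mapsto \vartheta_x$ commutes with boundary operators, giving a well-defined map of semi-simplicial sets $\mf B \to \pmb{SC}$ and hence the minimal s.c. bundle $\mf p^{\min}$.

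Second, over each base simplex $\la k \ra \xar{x} \mf B$ I would construct a prism bundle $\mf H_x$ on the semi-simplicial prism $\la k \ra \times \la 1 \ra$, whose restrictions to the two base faces are $x^*\mf p$ and $x^*\mf p^{\min}$. The prism decomposes via Eilenberg-Zilber shuffles into $k{+}1$ non-degenerate $(k{+}1)$-simplices, and one passes from the necklace $\vartheta_x = \mc N(x^*\mf p)$ on the bottom to the sub-necklace $\vartheta_x^{\min}$ on the top by deleting the non-chosen beads one color at a time, in the order prescribed by the shuffle triangulation. Each deletion is realized by a single maximal-simplex collapse in the total space of $\mf H_x$ — this is one spindle — so that the projection $\mf H_x \to x^*\mf p^{\min} \times \la 1 \ra$ is a fiberwise simple map in the sense of \cite{waldhausen2013}.

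Third, I would globalize by showing that the $\mf H_x$ are compatible under every base face operator: for $[k{-}1] \xar{d_i} [k]$ the bundle $\mf H_{x\circ d_i}$ is the pullback of $\mf H_x$ along $\la d_i\ra \times \on{id}_{\la 1\ra}$. This reduces to the observation that ``delete the non-chosen $j$-colored beads'' depends only on the color $j$ and on the chosen arcs $a_{x(j)}$, both of which are intrinsic to vertices of $\mf B$ rather than to the ambient simplex. The compatibility allows the $\mf H_x$ to assemble into a global s.c. bundle $\mf h : \mf H \to \mf B \times \la 1 \ra$ realizing the strong concordance. Uniqueness of $\mf p^{\min}$ given the arc choices is immediate, since a minimal bundle is determined by its local system of circular permutations and each $\vartheta_x$ is forced by the chosen beads. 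The main obstacle is the third step: one must choose the color-deletion order inside each shuffle decomposition so that the resulting prism bundles literally coincide on shared faces, and this forces the order of deletions along the prism to be governed entirely by the base face structure, not by any extraneous combinatorial data.
\end{Proof}
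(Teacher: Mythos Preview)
Your first step --- extracting the minimal target $\mf p^{\min}$ directly as the local system $x\mapsto\vartheta_x$ of circular sub-necklaces picked out by the chosen arcs --- is correct and is a pleasant explicit description of what the paper obtains only implicitly as the end result of the contraction process.

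However, your steps 2 and 3 take a harder road than necessary, and step 2 as written has a real gap. The shuffle decomposition of $\la k\ra\times\la 1\ra$ has exactly $k{+}1$ maximal simplices, but the number of non-chosen beads to delete depends on the sizes of the circles $c(x(i))$ and can be arbitrarily large; so ``one color at a time, one spindle per shuffle simplex'' cannot account for all the deletions. Moreover, a necklace over a $(k{+}1)$-dimensional shuffle simplex must carry beads in $k{+}2$ colors, whereas the intermediate necklaces you describe still live in the $k{+}1$ base colors. So $\mf H_x$ is not actually specified, and the compatibility check in step 3 has nothing concrete to verify.

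The paper's argument bypasses both issues by working globally rather than simplex-by-simplex. One starts with the honest product bundle $\mf p\times\la 1\ra$ on $\mf B\times\la 1\ra$, which is already a correct s.c.\ bundle with no gluing to perform. Then one applies spindle contractions one arc at a time, always at copies $a_1$ of non-chosen arcs sitting over vertices $(v,1)$ on the $1$-side. Each such contraction is a single colimit in $\on{Arr}\tilde\NN$, the accompanying Lemma guarantees the result is again an s.c.\ bundle, and since $\on{st}(a_1)$ meets no simplex of $\mf B\times\{0\}$ the $0$-side is untouched. After exhausting all non-chosen arcs on the $1$-side one has the desired concordance; its $1$-restriction is minimal and, because the outcome depends only on which arcs survive, is determined by the choice of one arc per circle. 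The global viewpoint makes the variable number of deletions irrelevant and eliminates your gluing obstacle entirely.
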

We will prove the proposition after  introducing spindle contraction trick.
\p{}
Suppose that we have a vertex $v \in \pmb B_0$, circle $ c(v) $ over $v$ and an arc $a \in c_1(v)$. Consider the star of $a$ in $\pmb E$, $\on{st} a \xar{} \pmb E$ and the star of $v$, $\on{st}(v) \xar{} \pmb B$.
The projection $\pmb p$ induces a subbundle -- the ``spindle" ${\on{sp}}(a)$ undersood as a morpism in $\on{Arr} \tilde \NN$ :
\begin{equation} {\on{sp}}(a) :
\begin{tikzcd}
{\on{st}(a)} \arrow[d,"\pmb p'"'] \arrow[r]& \pmb E \arrow[d,"\pmb p"] \\
{\on{st}(v)}\arrow[r,""]                   & { \pmb B}
\end{tikzcd}
\end{equation}
Spindle's projection on the base can be understood as a morphism in $\on{Arr} \tilde \NN$
\begin{equation}
\begin{tikzcd}
 {\on{st}(v)}\arrow[d,"\on{Id}"']& {\on{st}(a)}\arrow[l,"\pmb p'"']  \arrow[d,"\pmb p'"]\\
 {\on{st}(v)}& \on{st}(v) \arrow[l,"\on{Id}"' ]
\end{tikzcd} : \ul{\on{sp}}(a)
\end{equation}
``Spindle contraction" $\pmb b/a$  is  colimit in $\on{Arr} \tilde \NN$ of the diagram
\begin{equation}
\begin{tikzcd}
{p'} \arrow[d,"\ul{\on{sp}}(a)"'] \arrow[r,"\on{sp}(a)"]& \pmb p \arrow[d,dashrightarrow,""] \\
{\on{Id}_{\on{st}(v)}}\arrow[r,dashrightarrow,""]                   & { \pmb p/a}
\end{tikzcd}
\end{equation}
Figures \ref{spind},\ref{spind2} illustrates spindle contractions. Figure \ref{spind2} illustrates commuting of 2-dimensional spindle contractions over 1-dimensional base, but  3-dimensional contractions over 2-dimesional base already don't commute.
\begin{figure}[!h]
	\captionsetup{font=small,width=0.8\textwidth}	
	\begin{center}		\includegraphics[width=2.5in]{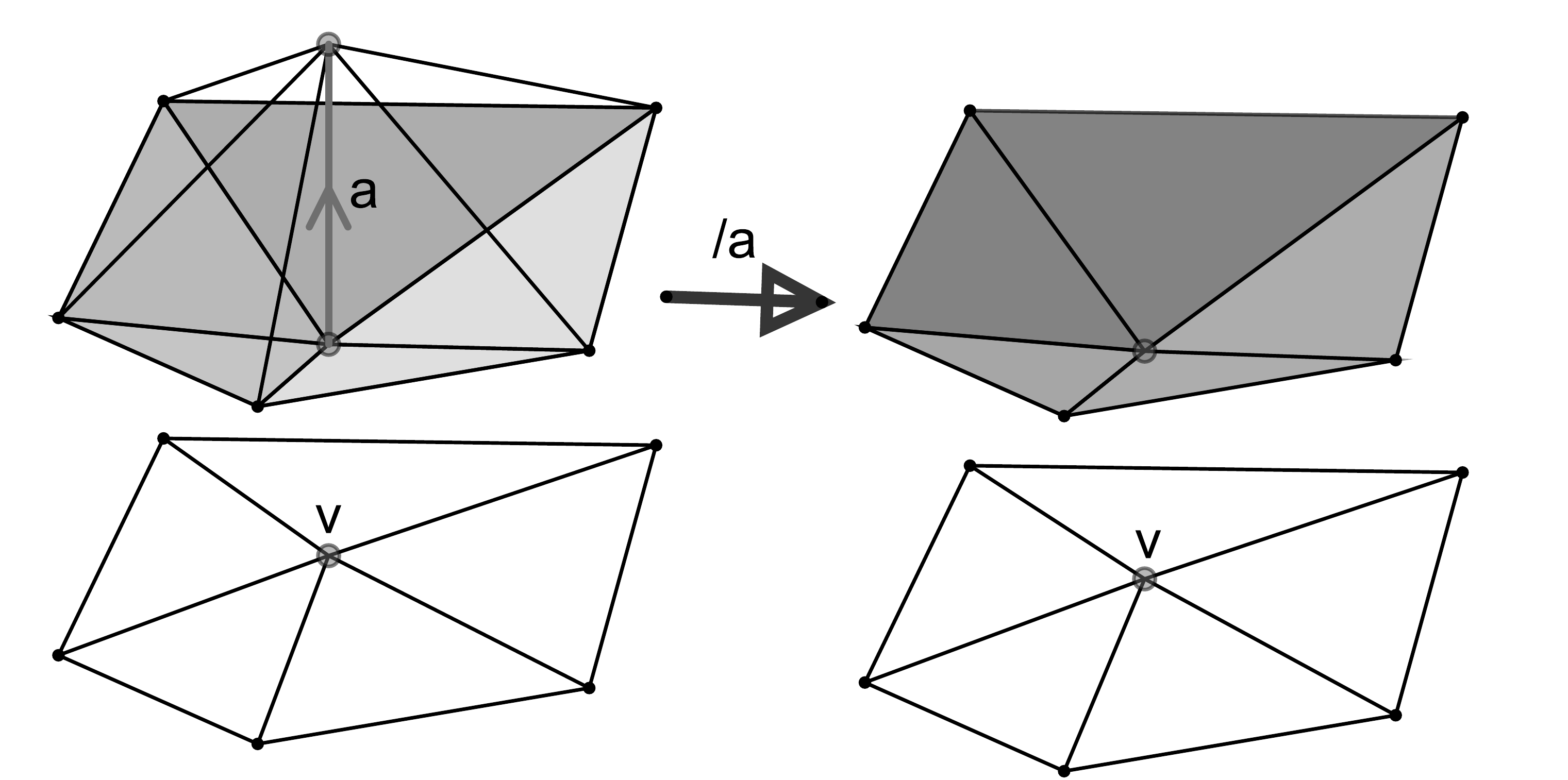}
		\caption{  \label{spind}}
	\end{center}
\end{figure}
	
\begin{figure}[h!]
	\captionsetup{font=small,width=0.8\textwidth}
	\begin{center}
		\includegraphics[width=4.0in]{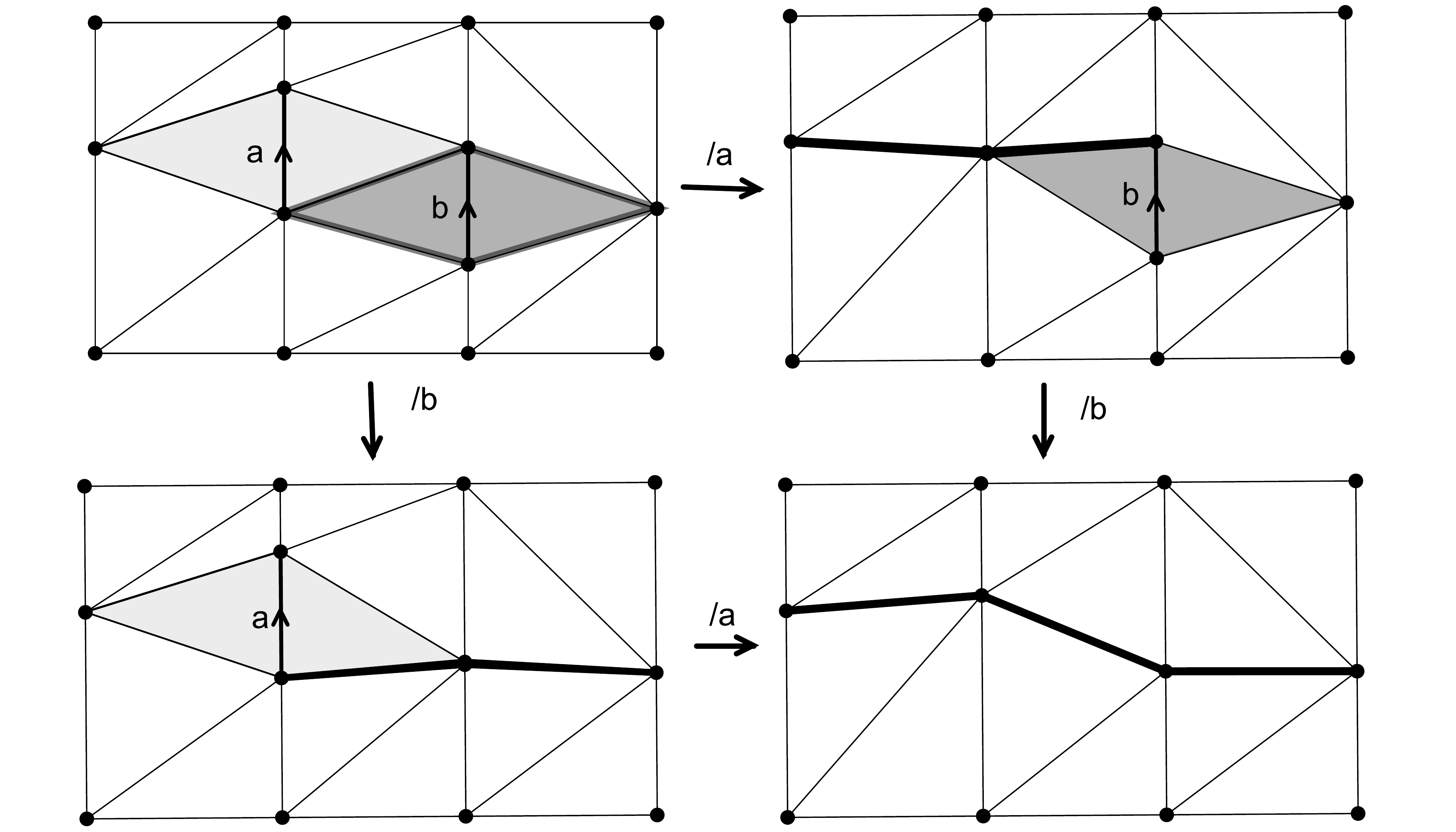}
		\caption{  \label{spind2}}
	\end{center}
\end{figure}

\begin{lemma}
If the circle $c(v)$ has more than one arc and $a\in c_1(v)$ - a chosen arc, then the spindle contraction $\pmb p/a$ is a correct s.c. bundle.
\end{lemma}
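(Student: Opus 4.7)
The plan is to unpack the spindle contraction stalk-by-stalk using the necklace description of elementary s.c. bundles from Section \ref{neck}. First I would identify what the spindle actually is over each simplex of $\on{st}(v)$: for a $k$-simplex $x \ni v$, the fibre of $p' : \on{st}(a) \to \on{st}(v)$ over $x$ is the single $(k{+}1)$-dimensional maximal simplex of $x^*\mf E$ that collapses to $v$ along the edge $a$, together with those of its faces that contain $a$. Under the bead-vs-arc correspondence recalled in Section \ref{neck}, this maximal simplex is precisely one distinguished bead $b_{x,a}$ of the necklace $\mc N(x^*\mf p)$, namely the $v$-coloured bead whose image under the face map $\mc N(x^*\mf p)\to c(v)$ (deletion of all colours except $v$) is the arc $a$. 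The choice of $a$ thus pins down exactly one bead in every necklace over a simplex containing $v$, and this choice is natural with respect to any face operator preserving $v$.

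Next I would compute the pushout $\mf p/a$ stalk-wise. Colimits in $\on{Arr}\tilde\NN$ are computed in $\tilde\NN$, and the pushout diagram is supported entirely on $\on{st}(v)$, so stalks over simplices that do not contain $v$ are untouched. Over $x \in \on{st}(v)$, identifying every simplex of $\on{st}(a)$ with its $p'$-image in $\on{st}(v)$ collapses the $v$-edge of $b_{x,a}$ to a point; this degenerates the $(k{+}1)$-simplex $b_{x,a}$ onto the $k$-simplex $x$ and absorbs its $a$-containing faces into the corresponding faces of $x$. Passing to the non-degenerate core, the new stalk over $x$ is obtained from the old necklace $\mc N(x^*\mf p)$ simply by deleting the single bead $b_{x,a}$, with the remaining cyclic order and orientation preserved.

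It then remains to verify that the resulting local system of truncated necklaces is a bona fide s.c. bundle. For stalk validity, every new $x$-necklace must still carry at least one bead of each colour from the vertices of $x$. Colours other than $v$ are untouched, and the number of beads of colour $v$ drops by exactly one. Since the hypothesis gives at least two arcs in $c(v)$, and the bijection of Step 1 equates this with at least two $v$-coloured beads in each such $x$-necklace, one colour-$v$ bead survives. For compatibility with face operators: between two base simplices both containing $v$, the distinguished beads correspond under the face operator by the naturality of Step 1, so bead-deletion commutes with colour-deletion; when the face operator drops $v$, all $v$-coloured beads disappear anyway, making the prior removal of $b_{x,a}$ invisible. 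The main technical obstacle is precisely the middle step — arguing that the semi-simplicial pushout really performs only the intended edge-collapse and produces no spurious identifications outside $\on{st}(a)$. This is handled by the explicit description of colimits in $\tilde\NN$ from Section \ref{ss}, but deserves careful bookkeeping; once established, the surrounding verification is routine.
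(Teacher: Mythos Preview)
Your proposal is correct and follows essentially the same approach as the paper: translate the spindle contraction into the necklace language, observe that it amounts to deleting the single bead corresponding to the arc $a$ from every necklace over a simplex in $\on{st}(v)$, and note that the hypothesis $\#c_1(v)\geq 2$ guarantees a $v$-coloured bead survives. The paper's proof is a two-sentence sketch of exactly this; your version is more thorough, and your explicit flag on the pushout computation (that the colimit in $\on{Arr}\tilde\NN$ really performs only the intended collapse) is a legitimate point the paper simply asserts.
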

\begin{proof} In the simplicial language, the contraction can look  a bit puzzling,
but is obvious in the language of the local system of necklaces encoding the bundle $\pmb p$. One should take the bead corresponding to the arc  $a$ and remove this bead from all the necklaces having it in the boundary. The local system will remain correct if the bead was not the last in the necklace $\mc N(c(v))$ monochromatically colored  by $v$.
\end{proof}
\begin{proof}[Proof of Proposition \ref{conc}]
Take a s.c. bundle $\pmb E \times \la 1 \ra \xar{\pmb p \times \la 1\ra} \pmb B \times \la 1 \ra$. It has two copies  of bundle $\pmb p $ labeled by 0 and 1. Take the copy  1. Pick there the copy $a_1$ of arc $a$ over the copy $v_1$ of the vertex $v$ and perform the spindle contraction $\pmb p \times \la 1 \ra/a_1 $.  It coincides with $\pmb p/a$ on the 1-side and don't affect 0-side because  only the star is affected. One can do contractions  in any  order up to some circle of the bundle has more than two arcs. Now, select a single arc in every circle of the bundle and contract all the others by spindle contractions. The resulting strongly concordant minimal s.c. bundle will be completely determined by this selection.
\end{proof}

\section{Local  binary formula for Chern cocycle of minimal triangulations}\label{01}
\begin{itemize}
\item[{
} $\mb c_{01}$]Local  binary formula for Chern cocycle of minimal s.c. bunles is a universal simpicial Chern  2-cocycle on $\mb c_{01}\in Z_\triangle^2(\pmb SC;\{0,1\}\subset\Z)$ defined as the parity of a circular permutation of 3 elements
\begin{equation}\label{iy}
 \begin{gathered}
 \mb c_{01}\c{0,1,2} = \mb c_{01}(\pmb ec\c{0,1,2})=0 \\ \mb c_{01}\c{2,1,0}= \mb c_{01}(\pmb ec\c{2,1,0})=1
 \end{gathered}
\end{equation}
\end{itemize}
It is the rational local formula from  \cite{MS} shifted by the universal 2-coboundary $\frac{1}{2}$. Alternatively, it can be directly obtained from the exponential sequence of sheaves (\cite[Section 2.1]{Brylinski2008} ) using sections related to  the top or to the bottom hats of unique spindles
over the \v Cech (hyper)cover of the base by stars which has the initial semi-simplicial base complex as its \v Cech nerve. Alternatively, it can be guessed and checked as we know a geometrical triangulation of Hopf bundle \cite{Madahar:2000}.

\section{Huntington cyclic order axioms, Kan properties of $\pmb SC$ and binary Chern cocycle.}\label{hunti}
\p{Cyclic order.} Total cyclic order on a set is a way to inject the set into an oriented circle. We imagine the circle drawn on a paper to be oriented clockwise. For the finite set it is the way to complete  the set  up to graph cycle by introducing oriented edges between elements meaning  ``the next".  If the elements of a finite set  are numbered by the set $[k]$, then a total cyclic order on this set is  $\c{\omega} \in \pmb SC_k$ for some permutation of its elements  $\omega \in  \pmb S_k$.
Abstract total  cyclic order relation  was introduced by philosopher  E.V. Huntington \cite{Hunt1916, Hunt1935} as one of the fundamental orders of (Platonic) Universe. Another exposition is in  \cite{Novak1982}.  It can be axiomatically defined by ternary $HC(a,b,c)$ relaton. The  meaning of $HC(a,b,c)=$ ``True" is that the ordered triple $a,b,c$ sits on the circle clockwise. As Huntington put it: ``the arc running from $a$ to $c$ through $b$ in direction of arrow is less then complete circuit".
The independent axioms of a total cyclic order are:
\begin{itemize}
	\item[i] Cyclicity: If $HC(a, b, c)$ then $HC(b, c, a)$.
	\item[ii] Asymmetry: If $HC(a, b, c)$ then not $HC(c, b, a)$.
	\item[iii] Transitivity: If $HC(a, b, c)$ and $HC(a, c, d)$ then $HC(a, b, d)$.
	\item[iv] Totality: If $a, b$ and $c$ are distinct, then either $HC(a, b, c)$ or $HC(c, b, a)$.
\end{itemize}
For a  finite  set ordered by $[k]$  we can read the Huntington theory of total cyclic orders as follows.

The sets consisting of 1 and 2 ordered elements have a single total cyclic order. Let $k=2$ -- then axioms i,ii,iv are applicable and a Huntington total cyclic order on $[2]$ fixes one of two cyclic permutations of 3 elements: either even, i.e. $\c{0,1,2}$ (if $HC(0,1,2)$ holds) or odd, i.e $\c{2,1,0}$ (if $HC(2,1,0)$ holds).
Thus the two total Huntington cyclic orders on the ordered set $[2]$ fix and are fixed by the function $\mb c_{01}$ (\ref{iy}). It is the key observation.

  If $k=4$, then the transitivity axiom iii starts playing a role and gives a condition when 4 circular permutations of subtriples of $[3]$  fix the unique circular permutation of entire $[3]$.

  For all $k\geq 4$ the theory states that system of circular permutations of all subtriples which satisfy transitivity for all subquadruples, fix the unique circular order on entire $[k]$.

  We note that subsets of circular permutations are  simplicial
 boundaries in $\pmb SC$. Now   we can
translate the above observations into a form of Kan Extension Lifting Property for circular permutations over simplicial pairs $(\la k \ra,\partial \la k \ra ) $ in all the range with a single gap in dimension 3:
\begin{prop} \label{kan}
If $k = 0,1$ and $k \geq 4$ then any map of $k-1$-sphere $\partial \la k\ra \xar{\varphi} \pmb SC$ has a unique lift to a map  of $\la k \ra$, i.e. there exist a unique map $\la k \ra\xar{ \tilde \varphi} \pmb SC$ such that $\tilde \varphi|_{\partial \la  k \ra } = \varphi$. If $k=2$ then there exist two  different lifts. Dimension $k=3$ is exceptional.     	
\end{prop}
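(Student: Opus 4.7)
The plan is to identify simplicial maps $\la k \ra \to \pmb SC$ with elements of $\pmb SC_k$, and maps $\partial \la k \ra \to \pmb SC$ with coherent families of circular permutations on the $k$-element subsets of $[k]$; under this identification the face operators $d_i$ of $\pmb SC$ correspond to bead deletion, so coherence on an $m$-subset is inherited from coherence on its $(m{+}1)$-supersets and the lifting question reduces to the combinatorics of Huntington's ternary relation $HC$.

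First I would dispose of the trivial cases. For $k=0,1$ both $\pmb SC_0$ and $\pmb SC_1$ are singletons and there is nothing to prove. For $k=2$, every $1$-simplex of $\partial \la 2 \ra$ is forced to the unique element of $\pmb SC_1$, so $\varphi$ is the unique such map, while the two lifts are the two elements $\c{0,1,2}, \c{2,1,0} \in \pmb SC_2$, realizing exactly Huntington's totality axiom (iv) on $[2]$.

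For $k \geq 4$ the crucial observation is that every $3$-subset $T \subset [k]$ and every $4$-subset $Q \subset [k]$ appears inside $\partial \la k \ra$ as a face; thus $\varphi$ induces a cyclic order on every triple, and its restriction to the $3$-dimensional face spanned by a $4$-subset $Q$ forces the Huntington transitivity axiom (iii) on $Q$. By Huntington's theorem, such a transitive system of cyclic orders on triples descends from a unique cyclic order on all of $[k]$, yielding a unique $\tilde\varphi \in \pmb SC_k$ restricting to $\varphi$.

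Dimension $k=3$ is the genuine exception: $\partial \la 3 \ra$ consists of four $2$-simplices, each carrying one of two cyclic orders, giving $2^4 = 16$ possible boundary maps, whereas $|\pmb SC_3| = (4-1)! = 6$; the $10$ non-liftable maps are precisely those violating Huntington's transitivity on the unique $4$-subset $[3]$ itself, an axiom which here cannot be enforced by any larger ambient face. The main obstacle in writing out a rigorous proof will be formulating Huntington's extension theorem in precisely the combinatorial form required and verifying that the face operators $d_i$ on $\pmb SC$ do coincide with bead deletion on circular permutations in the way this identification demands.
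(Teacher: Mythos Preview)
Your proposal is correct and follows essentially the same route as the paper: the paper does not give a separate formal proof of this proposition but rather derives it from the preceding discussion translating Huntington's axioms (cyclicity, asymmetry, transitivity, totality) into the simplicial structure of $\pmb{SC}$, which is exactly what you do. Your write-up is in fact more explicit than the paper's, particularly in noting that for $k\geq 4$ every $4$-subset of $[k]$ spans a $3$-face of $\partial\la k\ra$ and hence receives an element of $\pmb{SC}_3$ from $\varphi$, which is precisely what enforces transitivity on the induced ternary relation; the one point you should make fully explicit when writing it out is that the resulting $\tilde\varphi\in\pmb{SC}_k$ agrees with $\varphi$ not only on $2$-faces but on all $(k{-}1)$-faces, which follows by applying Huntington's uniqueness inductively at each intermediate level.
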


We can see that  $\pmb SC \approx K(\Z,2)$  by simplicial homotopy argument and we observe that it follows directly from its Huntington local axiomatic description above. The Proposition \ref{kan} states that the simplicial set $\pmb SC$ is minimal Kan contractible in all  dimensions except 2. Therefore it has homotopy groups $\pi_i$ vanishing  if $ i=0,1,\geq 3$. We need to know $\pi_2$. But  by Hurewicz theorem it amounts  computing second homology of $\pmb SC$ which is just the homology of the 2 sphere, by inspection of the hexagram on Figure \ref{hex}.

\p{Chern binary cocycle and cyclic order transitivity axiom.}
In Proposition \ref{kan} Huntington axioms became translated into a simplicial homotopy of $\pmb SC$ with a gap in dimension 3 where the transitivity axiom iii is not formulated topologically.   We can fill the gap a bit miraculously .

\bigskip
\noindent
Let
$f\in C_\triangle^2 (\partial \la 3 \ra; \{0,1\} \subset \Z)$ be an 
 0,1 valued integer cochain on the boundary of the ordered 3-simplex $\partial \la 3 \ra$. We can translate it into a Huntington cyclic relation $f^H$ on ordered subtriples of the set of vertices  $[3]$, which is the same as  fixing either even or odd cyclic permutations of the set of vertices of faces, which is the same as a singular map $\partial \la 3 \ra \xar{\ol f} \pmb SC $, which is the same as a minimal s.c. bundle $\ol f^* $ on $\partial \la 3 \ra$.

 We know that, if for the relation $f^H$ transitivity holds, then the cyclic orders on the triples assemble into a cyclic order on all [3], or equivalently $\ol f$ has extension over $\la 3 \ra$ and the minimal circle bundle $\ol f^*$  has extension to the minimal bundle $\tilde f^*$ over $\la 3 \ra$. For the bundle $\tilde f^*$, the cochain $f$ is its Chern binary cocycle  $\mb c_{01}( \tilde f^*) $ (Section \ref{01}) and therefore transitivity of $f^H$ implies that
\begin{equation}\label{coc}
\sum_{i=0}^4 (-1)^i f_i =0
\end{equation}
The inverse statement is true:
\begin{prop} \label{01har}
If the equation (\ref{coc}) holds, then
Huntington order  $f^H$ is transitive, or equivalently   the cyclic orders on triples are uniquely extendable  to a cyclic order on  $[3]$, or equivalently  the corresponding minimal bundle is uniquely extendable.
\end{prop}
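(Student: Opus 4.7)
The plan is to prove Proposition~\ref{01har} by setting up an explicit bijection between $\{0,1\}$-valued 2-cocycles on $\partial\la 3\ra$ and circular permutations of $[3]$; once this bijection is established, every binary cocycle $f$ will admit a unique extension $\tilde f \in \pmb SC_3$, which is precisely the unique minimal s.c. bundle extending $\ol f^*$ and, equivalently, the Huntington cyclic order on $[3]$ witnessing transitivity of $f^H$.

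First I would enumerate both sides. The cocycle identity (\ref{coc}), read as the alternating sum over the four faces of $\la 3 \ra$, rearranges to $f_0 + f_2 = f_1 + f_3$ with each $f_i \in \{0,1\}$; case analysis on the common value of the two sides (necessarily in $\{0,1,2\}$) yields $1+4+1=6$ binary cocycles. On the other side, there are exactly $(4-1)!=6$ circular permutations of the four-element set $[3]$. The comparison map sends $\c\omega \in \pmb SC_3$ to the 2-cochain whose value on face $\partial_i\la 3\ra$ is the $\mb c_{01}$-value of the boundary circular permutation in $\pmb SC_2$, via the convention (\ref{iy}). The forward direction already established in the paragraph preceding the proposition shows that this cochain satisfies (\ref{coc}), so the map lands in the 6-element cocycle set.

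The main obstacle — and the crux of the argument — is verifying that this map is injective; once injectivity is in hand, surjectivity (and hence bijectivity) follows by matching cardinalities. For injectivity I would argue directly: given $\c\omega \in \pmb SC_3$, its restriction to any face $\partial_i\la 3\ra$ fully determines the cyclic order on $[3]\setminus\{i\}$, and the restriction to a second face $\partial_j\la 3\ra$ with $j \ne i$ determines the insertion position of the remaining vertex relative to that cyclic order, uniquely pinning down $\c\omega$. I would carry out the parity bookkeeping in (\ref{iy}) — the monotone renumbering of $[3]\setminus\{i\}$ to $[2]$ on each face — once, and check that it is consistent with the simplicial boundary on $\pmb SC$ defined in Section~\ref{cm}, after which the enumeration argument closes the proof.
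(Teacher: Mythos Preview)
Your overall strategy --- count six binary cocycles, count six circular permutations of $[3]$, and exhibit the boundary map $\pmb SC_3 \to Z^2_\triangle(\partial\la 3\ra;\{0,1\})$ as a bijection --- is exactly the paper's approach, which carries it out by brute enumeration in a table. The difference is that you try to replace the enumeration by an abstract injectivity argument, and that argument has a gap.

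You claim that two face restrictions $d_i\c\omega$ and $d_j\c\omega$ already determine $\c\omega$. This is false. Take $\c{0,1,2,3}$ and $\c{0,3,1,2}$: deleting $3$ from either yields the same cyclic order $\c{0,1,2}$ on $\{0,1,2\}$, and deleting $0$ from either yields the same cyclic order $\c{1,2,3}$ on $\{1,2,3\}$. So the pair $(d_0,d_3)$ does not separate these two elements of $\pmb SC_3$, and hence the corresponding pair of binary values $(f_0,f_3)$ cannot either. Geometrically, once you know the cyclic order on $[3]\setminus\{i\}$ there are three slots for $i$; a single additional face only records a parity and cannot distinguish all three. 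You need at least three faces to pin down $\c\omega$, and the cleanest way to verify injectivity at this size is precisely the case check the paper performs. If you want to keep an abstract argument, you should either use three faces with a careful case split, or invoke the Huntington transitivity axiom directly (which is circular here, since that is what you are trying to establish).
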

\begin{proof}
	The proof is experimental. It is a pseudoscientific  check of cases  during meditation over the hexagram in Figure \ref{hex} representing the 3-skeleton $\pmb  SC(3)$. But the check is very short.
	 We  list in Table \ref{bin}  all $2^4=16$ binary 2-cochains $f$ on $\partial \la 3 \ra$ (in the order of 4-positional binary numbers)
	They correspond to 16 minimal s.c. bundles on $\partial \la 3 \ra$. The value
	$$\mb c(f)=\sum_{i=0}^4 (-1)^i f_i$$
	is the Chern number of the bundle $\tilde f^*$. It can be equal
	to $0$ (trivial bundle) $\pm 1$ (Hopf bundle with opposite orientations) $\pm 2$ (tangent bundle to the sphere $S^2$ with opposite orientations). Among them 6 are cocycles, i.e. minimal trivial bundles on $\partial \la 3 \ra$.
	In parallel we list all  6 minimal elementary bundles on the entire $\la 3 \ra$ corresponding to the circular permutations of 4 elements. Computing their boundary bundles gives exactly all the 6  minimal trivial bundles on $\partial \Delta^3$, corresponding to the 6 binary cocycles. This provides a 1-1 correspondence between 6 binary cocycles of $Z^2_{\triangle}(\partial \la 3\ra; \{0,1\} \subset \Z )$  and trivial s.c. bundles on $\partial \la 3 \ra$ extending to a minimal s.c. bundle on $\la 3 \ra$. The correspondence is presented in the table \ref{bin}:
\begin{table}[h]
	\scriptsize
	\captionsetup{font=small,width=0.8\textwidth}
	\centering
	\begin{tabular}{|c|c|c|c|| c|}
		\hline
		$f_0(123)$  & $f_1(023)$ &$f_2(013)$ & $f_3(012)$ &  \\ \hline
		+ & - & +  & - & $\mb c(f)$ \\ \hline \hline
		$\c{123}$  & $\c{023}$   & $\c{013}$  & $\c{012}$  & $\c{0123}$ \\             		
		0 & 0 & 0 & 0&\ul 0 \\ \hline \hline
		0 & 0 & 0 & 1 & -1\\ \hline
		0 & 0 & 1 & 0 & 1\\ \hline \hline
		$\c(231)$  & $\c{031}$   & $\c{031}$  & $\c{021}$  & $\c{0231}$ \\
		0 & 0 & 1 & 1 & \ul 0 \\ \hline \hline
		0 & 1 & 0 & 0 & -1 \\ \hline
		0 & 1 & 0 & 1 & -2 \\ \hline \hline
		$\c{312}$  & $\c{032}$   & $\c{031}$  & $\c{012}$  & $\c{0312}$ \\
		0 & 1 & 1 & 0 & \ul 0 \\ \hline \hline
		0 & 1 & 1 & 1 & -1 \\ \hline
		1 & 0 & 0 & 0 & 1 \\ \hline \hline
		$\c{213}$  & $\c{023}$   & $\c{013}$  & $\c{021}$  & $\c{0213}$ \\
		1 & 0 & 0 & 1 & \ul 0\\ \hline \hline
		1 & 0 & 1 & 0 & 2\\ \hline
		1 & 0 & 1 & 1 & 1 \\ \hline \hline
		$\c{132}$  & $\c{032}$   & $\c{013}$  & $\c{012}$  & $\c{0132}$ \\
		1 & 1 & 0 & 0 & \ul 0 \\ \hline \hline
		1 & 1 & 0 & 1 & -1 \\ \hline
		1 & 1 & 1 & 0 &  1 \\ \hline \hline
		$\c{321}$  & $\c{032}$   & $\c{031}$  & $\c{021}$  & $\c{0321}$ \\
		1 & 1 & 1 & 1 & \ul 0 \\ \hline
	\end{tabular}	
		\caption{  \label{bin}}
\end{table} 	
\end{proof}	
\section{Proof of Theorem 1.}\label{prt1}
Summarizing the achievements we
got:
\begin{prop} \label{hunt}

The set of minimal s.c. bundles on the base semi-simplicial set $\pmb B$ is in canonical one-to-one correspondence with local systems of circular permutations of ordered vertices of base simplices, simplicial maps $\on{Hom} (\pmb B, \pmb SC)$ and simplicial 2-cocycles in $Z^2_\triangle (\mf B; \{0,1\}\subset \Z)$.
\end{prop}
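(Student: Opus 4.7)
The plan is to assemble the three-way bijection from ingredients already in hand. The equivalence between minimal s.c. bundles and local systems of circular permutations of ordered vertices of the base simplices, as well as with simplicial maps $\pmb B\to \pmb{SC}$, was essentially established in Section \ref{cm}: a minimal stalk is determined, with no automorphism ambiguity, by a single circular permutation $\vartheta\in\pmb{SC}_k$ via $\pmb{ec}(\vartheta)$, and boundary compatibility of stalks is precisely the simplicial structure imposed on $\pmb{SC}$ there. Hence the passage from a minimal bundle to its classifying map is a bijection by the universal property recorded at the end of that section; I would spend only a sentence recapping this.

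The substantive work is the identification with $Z^2_\triangle(\pmb B;\{0,1\}\subset\Z)$. In one direction, I define the map $\on{Hom}(\pmb B,\pmb{SC})\to Z^2_\triangle(\pmb B;\{0,1\}\subset\Z)$ by pullback of the universal binary Chern cocycle of Section \ref{01}, namely $\varphi\mapsto f_\varphi:=\varphi^*\mb c_{01}$. That $f_\varphi$ is a cocycle is the easy direction of Proposition \ref{01har}, applied to each $3$-simplex $y\colon\la 3\ra\to\pmb B$: since $\varphi\circ y$ itself witnesses an extension of $\varphi|_{\partial\la 3\ra}$ over $\la 3\ra$, the alternating sum (\ref{coc}) of the four face values of $f_\varphi$ on $y$ vanishes.

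In the reverse direction, starting from $f\in Z^2_\triangle(\pmb B;\{0,1\}\subset\Z)$, I construct $\varphi_f\colon\pmb B\to\pmb{SC}$ by induction on skeleta. Proposition \ref{kan} forces unique values on $\pmb B_0$ and $\pmb B_1$, since $\pmb{SC}_0,\pmb{SC}_1$ are singletons. On each $2$-simplex $y$ there are exactly two possible lifts, and I use $f(y)\in\{0,1\}$ to choose the one whose $\mb c_{01}$-value equals $f(y)$. On each $3$-simplex the cocycle equation (\ref{coc}) is precisely the hypothesis of Proposition \ref{01har}, which then supplies a unique extension. For $k\geq 4$ Proposition \ref{kan} again yields unique extensions, so the induction runs without obstruction and produces a well-defined simplicial map $\varphi_f$ with $\varphi_f^*\mb c_{01}=f$.

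What remains is verifying the two constructions are mutually inverse and natural in $\pmb B$. One direction, $f\mapsto\varphi_f\mapsto\varphi_f^*\mb c_{01}=f$, is built into the $2$-dimensional selection step. The other direction, $\varphi\mapsto f_\varphi\mapsto \varphi_{f_\varphi}=\varphi$, is forced simplex by simplex: both maps agree on the $2$-skeleton by the matching-parity choice, and then the uniqueness clauses in Propositions \ref{kan} and \ref{01har} propagate the agreement to every higher simplex. The main obstacle I anticipate is simply bookkeeping: organizing the skeletal induction cleanly in the semi-simplicial setting and checking naturality of the pullback under singular maps of bases. Once the $3$-dimensional case is absorbed into Proposition \ref{01har}, everything else is formal.
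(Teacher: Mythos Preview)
Your proposal is correct and follows essentially the same approach as the paper's own proof: the first equivalences are referred back to Section~\ref{neck}.\ref{cm}, and the correspondence with binary $2$-cocycles is established by the same skeletal induction, invoking Proposition~\ref{01har} at the $3$-skeleton and Proposition~\ref{kan} in all other dimensions. Your write-up is simply more explicit about the two directions of the bijection and the mutually-inverse check than the paper's terse version.
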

\begin{proof}
The first statements were discussed in p. \ref{neck}.\ref{cm}
The last statement we know from general Huntington theory
and Proposition \ref{01har}. The bundle defines uniquely the cocycle.

We need the inverse: a binary 2-cocycle uniquely defines bundle. Here is why it is true:
a binary 2-cocycle defines uniquely the local system of circular permutations on the 2 skeleton; by Proposition \ref{01har} the cocycle  condition provides transitivity of the system of cyclic orders, therefore it is uniquely extendable over the 3 skeleton. Now, by Kan property of cyclic orders from Proposition \ref{kan} the local system of circular permutations on the 3-skeleton is uniquely extendable  on the entire $\pmb B$.
\end{proof}

\begin{proof}[Proof of Theorem 1]
By Proposition \ref{conc} we know that any semi-simplicial
s.c. bundle triangulated over $\pmb B$ is concordant to a minimal one and therefore its Chern class is representable by a simplicial 2-cochain in the base with binary values.  By Proposition \ref{hunt} the inverse statement is true.
For classically simplicial triangulations the condition is necessary but not sufficient (see \cite{Mnev:2018}).
	
\end{proof}

\section{Effortless \& Local assembly of triangulated circle bundles with a prescribed Chern number over a closed oriented  triangulated surface.}\label{prt2}

For minimally triangulated circle bundles over triangulated oriented closed surfaces, we got a purely unobstructed free way of constructing triangulated circle  bundles with a prescribed Chern number.
\p{}
Suppose we have an oriented surface $M$, triangulated by a semi-simplicial complex $\pmb T$, $|\pmb T|=M$, $[M] \in Z^\triangle_2(\pmb T;\Z)$ -- fundamental class of $M$, fixing the orientation.
Then any 2-simplex  $x \in \pmb T_2$ obtains  relative positive or negative  orientation $o(x)\in \Z/2\Z$ according to value $(-1)^{o(x)}$ of fundamental class $[M]$ on the simplex $x$.
\begin{lemma}\label{ppar}
	Semi-simplicial triangulation of an oriented closed surface always has an even number of 2-simplices, half of them positively oriented, half of them - negatively.    	
\end{lemma}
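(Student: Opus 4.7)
The plan is to translate the statement into the vanishing of a cochain pairing. The assertion that positively and negatively oriented 2-simplices occur in equal numbers is equivalent to
$$\sum_{x \in \mf T_2} (-1)^{o(x)} = 0,$$
and by definition of $o(x)$ the left-hand side is precisely the pairing $\langle \mathbf{1}, [M] \rangle$, where $\mathbf{1} \in C^2_\triangle(\mf T; \Z)$ is the 2-cochain with value $1$ on every 2-simplex. Thus it suffices to show that $\mathbf{1}$ evaluates to zero on the fundamental class.

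The key observation is that $\mathbf{1}$ is a coboundary. I would introduce the 1-cochain $\alpha \in C^1_\triangle(\mf T; \Z)$ taking the value $1$ on every 1-simplex. For any 2-simplex $x \in \mf T_2$ the semi-simplicial coboundary formula gives
$$(\delta \alpha)(x) = \alpha(d_0 x) - \alpha(d_1 x) + \alpha(d_2 x) = 1 - 1 + 1 = 1,$$
so $\delta \alpha = \mathbf{1}$. This single combinatorial line is essentially the whole proof; it uses only the alternating form of the boundary operator and is insensitive to the vertex identifications that can appear in the semi-simplicial (rather than classically simplicial) setting.

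Since $M$ is closed, $[M]$ is a cycle, $\partial [M] = 0$, and the $\partial$--$\delta$ adjunction then yields
$$\sum_{x \in \mf T_2} (-1)^{o(x)} \;=\; \langle \mathbf{1}, [M] \rangle \;=\; \langle \delta \alpha, [M] \rangle \;=\; \langle \alpha, \partial [M] \rangle \;=\; 0.$$
Therefore the positively and negatively oriented 2-simplices are equinumerous, and in particular $\#\mf T_2$ is even. There is no real obstacle here: the only conceptual step is to notice that the constant 1-cochain on edges coboundaries to the constant 2-cochain on triangles, after which the claim falls out of closedness of $M$.
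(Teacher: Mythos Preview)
Your proof is correct and essentially identical to the paper's own argument: both take the constant $1$-cochain on edges, observe that its coboundary is the constant $1$-cochain on triangles, and then pair with the fundamental cycle (the paper phrases the adjunction step as ``Stokes' theorem''). Your version is in fact slightly more explicit, writing out $(\delta\alpha)(x)=1-1+1=1$ where the paper simply asserts the coboundary identity.
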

\begin{proof}
	Pick a 1-cochain  $\mb 1_1 \in C_\triangle^1(\pmb T;\Z)$ having value 1 on every 1-simplex. Then  $d(\mb 1_1) = \mb 1_2$ is a coboundary in $B^2_\triangle (\pmb T; \Z)$ having value 1 on any 2-simplex. By Stokes' theorem, the  pairing is $\la \mb 1_2, [M]\ra =0$. Therefore, the number of positively oriented simplices is equal to the number of negatively oriented simplices and the total number $ \# \pmb T_2$ is even.
\end{proof}
\p{Proof of Theorem 2.}

\begin{proof}[Proof of Theorem \ref{thm02}]
	Take a simplicial 2-cochain $u\in C^2_\triangle(\pmb T;\{0,1\}\subset \Z)$. It will be a 2-cocycle since every 2-cochain in $C^2_\triangle(\pmb T;\Z)$ is a cocycle.    It defines an integer  $c(u)$, the element the second cohomology  group, by pairing with the fundamental cocycle: $$H^2(M;\Z)=\Z \ni c(u)= \la u, [M]\ra=\Sigma_{x\in \pmb T_2} (-1)^{o(x)}u(x) $$ By Lemma \ref{ppar}   $c(u)$ can be any integer number from the interval
	$$[-\frac{1}{2}\# \pmb T_2,...,-1,0,1,...,\frac{1}{2}\#\pmb T_2]$$
	Maximum value $\frac{1}{2}\# \mf B(2)$ of $c(u)$ is obtained by distributing 1's on the positively oriented simplices and 0's on the negatively oriented ones. Minimum value $-\frac{1}{2}\# \mf B(2)$ by distributing 1's on negatively oriented simplices, 0's on positively oriented ones.
	Picking the $u$ we can put the circular permutation $\c{0,1,2}$ on every 2-simplex $x$ having $u(x) =0$   and  $\c{2,0,1}$ on every simplex $x$ having $u(x)=1$. We effortlessly got a local system of necklaces since  boundaries  of circular permutations of  $3$ elements  are always the trivial circular permutations $\c{0,1},\c{0}$ and always fit because they are always of single type and have no automorphisms. Therefore, replacing circular permutations by elementary s.c. bundles, we obtain a minimal s.c. bundle having $u$ as its Chern cocycle and $c(u)$ as its Chern number. According to Proposition \ref{conc}, any bundle triangulated over  $\pmb T$ is concordant to a minimal one and therefore   has a binary simplicial representative of its Chern cocycle.

\bigskip
\noindent	
	By the argument from \cite[Lemma 0.1]{Mnev:2018} the bundle with maximal possible Chern number  $\frac{1}{2}\# \mf T_2$ can be only semi-simplically triangulated over the $\mf T$.  \end{proof}
It would be interesting to investigate more the case of classically simplicial triangulations. In view of Proposition \ref{puresimp} it seems that the spindle contraction  reduces a classically simplicial bundle triangulation to a classically simplicial triangulation with  only several possible types of elementary subbundles.  So the analysis  can be accessible.

\def\cprime{$'$} \def\cprime{$'$}

\end{document}